\newtheorem{theorem}{Theorem}[section]
 \newtheorem{lemma}[theorem]{Lemma}
 \theoremstyle{definition}
 \theoremstyle{remark}
 \newtheorem{remark}[theorem]{Remark}
 \newtheorem*{example}{Example}
 \numberwithin{equation}{section}
\begin{document}
\title{Solutions of a class of nonlinear matrix equations}

\author[Samik Pakhira]{Samik Pakhira}
\address{Department of Mathematics \& Statistics, Aliah University, II A/27, New Town, Kolkata-160, West Bengal, India}
\email{samikpakhira@gmail.com}

\author[Snehasish Bose]{Snehasish Bose}
\address{Theoretical Statistics and Mathematics Unit, ISI Bangalore, 8th Mile Mysore Road, Bangalore 560059, India}
\email{bosesonay@gmail.com}

\author[Sk Monowar Hossein]{Sk Monowar Hossein}
\address{Department of Mathematics \& Statistics, Aliah University, II A/27, New Town, Kolkata-160, West Bengal, India}
\email{skmonowar.math@aliah.ac.in}

\subjclass{15A24, 47H10, 47H09}

\keywords{Matrix Equation, Fixed point, Partially ordered set.}
\begin{abstract}
	In this article we present several necessary and sufficient conditions for the existence of Hermitian positive definite solutions of nonlinear matrix equations of the form 
	$X^s+A^*X^{-t}A+B^*X^{-p}B=Q$, where  $s,t,p \geq 1$, $A,B$ are nonsingular matrices and $Q$ is a Hermitian positive definite matrix. We derive some iterations to compute the solutions followed by some examples. In this context we also discuss about the maximal and the minimal Hermitian positive definite solution of this particular nonlinear matrix equation. 
\end{abstract}

\maketitle
\section{Introduction and Preliminaries}
Let $GL(n)$ be the set of all $n\times n$ nonsingular matrices and $\mathcal{P}(n)$ be the set of all Hermitian positive definite matrices.
Consider the nonlinear matrix equations of the form 
\begin{equation}\label{eqi:1}
X^s+A^*X^{-t}A+B^*X^{-p}B=Q
\end{equation}
where  $s,t,p \geq 1$, $A,B \in GL(n)$ and $Q \in \mathcal{P}(n)$. There are several problems in control theory, dynamical programming, ladder networks, statistics, etc., where this type of equations play an especially significant role. Thus it has always been a major concern to derive proficient techniques to solve nonlinear matrix equations of the type of \eqref{eqi:1}. Over the years several mathematicians solved this group of nonlinear matrix equations using various types of methods.
 To name a few: Anderson et al. \cite{AMT1990} ($X=A-BX^{-1}B^*$), Engwarda et al. \cite{ERR1993} ($X + A^*X^{-1}A = I$), Ferrante and Levy \cite{FL1996} ($X=Q-NX^{-1}N^*$), Meini \cite{M2002} ($X\pm A^*X^{-1}A=Q$), Ivanov \cite{I2006} ($X + A^*X^{-n}A = Q$), Long et al. \cite{LHZ2008} ($X+A^*X^{-1}A+B^*X^{-1}B=I$), Popchev \cite{PPKA2011} ($X+A^*X^{-1}A+B^*X^{-1}B=I$), Liu and Chen \cite{LC2011} ($X^s+A^*X^{-t_1}A+B^*X^{-t_2}B=Q$,~ $s\geq 1, 0<t_1,t_2\leq 1$), Vaezzadeh et al. \cite{VVSP2013} ($X+A^*X^{-1}A+B^*X^{-1}B=Q$), Hasanov \cite{H2017} ($X+A^*X^{-1}A-B^*X^{-1}B=I$), Hasanov et al. \cite{HA2015} ($X + A^*X^{-1}A + B^*X^{-1}B = Q$) and many more.
 
 Fixed point theory is one of the techniques that plays a definitive role for computing solutions of various types of nonlinear matrix equations. Ran and Reurings \cite{RR2004} using Ky Fan norm and an analogous result of Banach fixed point principle solved nonlinear matrix equations of the form $X=Q\pm \sum_{i=1}^m{A_i}^*F(X){A_i}$. In this context they proposed the following result.
 \begin{theorem}  \label{lemma 7} (\cite{RR2004})
 	Let $\mathbb{X}$ be a partially ordered set such that every pair $x,y\in \mathbb{X}$ has a lower bound and an upper bound. Furthermore, let $d$ be a metric on $\mathbb{X}$ such that $(\mathbb{X},d)$ be a complete metric space. If $\mathcal{F}$ is a continuous, monotone (order-preserving or order-reversing) map from $\mathbb{X}$ into $\mathbb{X}$ such that
 	\begin{enumerate}
 		\item[(1)] there exists $0<c<1~:~d(\mathcal{F}(x),\mathcal{F}(y))\leq c d(x,y)$, for all $x\geq y$,
 		\item[(2)] there exists $x_0\in\mathbb{X}~:~x_0\leq \mathcal{F}(x_0)$ or $x_0\geq \mathcal{F}(x_0)$,
 	\end{enumerate}
 	then $\mathcal{F}$ has a fixed point $\bar{x}$. Moreover, for every $x\in\mathbb{X}$,
 	\begin{equation*}
 	\lim_{n\to\infty}\mathcal{F}^n(x)=\bar{x}.
 	\end{equation*}
 \end{theorem}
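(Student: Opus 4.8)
The plan is to run a Banach-type iteration, but restricted to chains of comparable elements, since the contraction estimate in hypothesis (1) is only assumed for comparable pairs and is therefore unavailable in general. First I would fix the seed $x_0$ from hypothesis (2), say with $x_0 \le \mathcal{F}(x_0)$, and study the orbit $x_n := \mathcal{F}^n(x_0)$. The crucial preliminary observation is that consecutive iterates are always comparable: starting from $x_0 \le x_1$ and applying the monotone map $\mathcal{F}$, whether order-preserving or order-reversing, sends a comparable pair to a comparable pair, so by induction $x_n$ and $x_{n+1}$ are comparable for every $n$. Hypothesis (1) then applies to each consecutive pair and gives $d(x_{n+1}, x_{n+2}) \le c\, d(x_n, x_{n+1})$, hence $d(x_n, x_{n+1}) \le c^n d(x_0, x_1)$. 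A standard geometric-series estimate via the triangle inequality shows $\{x_n\}$ is Cauchy, so completeness yields a limit $\bar{x}$, and continuity of $\mathcal{F}$ forces $\mathcal{F}(\bar{x}) = \lim_{n} \mathcal{F}(x_n) = \lim_{n} x_{n+1} = \bar{x}$. Thus $\bar{x}$ is a fixed point. The remaining initial cases ($x_0 \ge \mathcal{F}(x_0)$, or $\mathcal{F}$ order-reversing) are handled identically once consecutive comparability is in hand.

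For the global convergence statement I would first record a \emph{contraction-on-comparable-orbits} lemma: if $u, v$ are comparable, then $\mathcal{F}^n(u)$ and $\mathcal{F}^n(v)$ remain comparable by monotonicity, and iterating hypothesis (1) gives $d(\mathcal{F}^n(u), \mathcal{F}^n(v)) \le c^n d(u,v) \to 0$. The difficulty is that an arbitrary $x \in \mathbb{X}$ need not be comparable to $\bar{x}$, so this lemma cannot be applied to the pair $(x, \bar{x})$ directly. This is exactly where the hypothesis that every pair has a lower bound and an upper bound enters: choose $z$ comparable to both $x$ and $\bar{x}$, for instance a common lower bound. Applying the lemma to the pairs $(z, x)$ and $(z, \bar{x})$, and using $\mathcal{F}^n(\bar{x}) = \bar{x}$, the triangle inequality gives $d(\mathcal{F}^n(x), \bar{x}) \le d(\mathcal{F}^n(x), \mathcal{F}^n(z)) + d(\mathcal{F}^n(z), \bar{x}) \to 0$, so $\mathcal{F}^n(x) \to \bar{x}$. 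Uniqueness of the fixed point then follows at once, since any fixed point $\bar{y}$ satisfies $\bar{y} = \mathcal{F}^n(\bar{y}) \to \bar{x}$.

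I expect the main obstacle to be conceptual rather than computational: recognizing that the contraction bound is simply unavailable for incomparable elements, and that the lattice-like hypothesis (existence of common bounds for every pair) is precisely the device that bridges an arbitrary point to the fixed point through a commonly comparable intermediary. Throughout, the monotonicity of $\mathcal{F}$ plays the dual role of propagating comparability along orbits, which is what keeps hypothesis (1) applicable at every step of each estimate.
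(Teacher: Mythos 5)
The paper states this theorem as a quoted result from Ran and Reurings \cite{RR2004} and gives no proof of its own, so there is nothing internal to compare against. Your argument is correct and is essentially the original proof: propagate comparability of consecutive iterates by monotonicity to get a geometric Cauchy orbit and hence a fixed point, then use the common lower/upper bound hypothesis to bridge an arbitrary $x$ to $\bar{x}$ through an element comparable to both, which is exactly how the cited source handles the global convergence and uniqueness.
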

Using this type of contraction theorems many authors solved different types of nonlinear matrix equations, such as, Lim \cite{L2009}, Duan and Liao \cite{DL2009a}, Berzig and Samet \cite{BS2011}, etc. Later Bose et al. \cite{SMK2016} generalizes their results by introducing a notion of $\omega$-distance in partially ordered $G$-metric spaces and solved the nonlinear matrix equation $X=Q\pm \sum_{i=1}^m{A_i}^*F(X){A_i}$. \\
Let $(\mathbb{X},\leq)$ be a partially ordered set and $d$ be a metric on $\mathbb{X}$. A mapping $F:\mathbb{X}\times\mathbb{X}\rightarrow\mathbb{X}$ has a mixed monotone property \cite{BL2006}
if $F(x,y)$ is monotone non-decreasing in $x$ and is monotone non-increasing in $y$, i.e, for any $x,y\in\mathbb{X}$,
\begin{equation*}
x_1,x_2\in\mathbb{X},~~x_1\leq{x_2}~\Rightarrow~F(x_1,y)\leq{F(x_2,y)}
\end{equation*}
and
\begin{equation*}
y_1,y_2\in\mathbb{X},~~y_1\leq{y_2}~\Rightarrow~F(x,y_1)\geq{F(x,y_2)}.
\end{equation*}
\newline
A pair $(x,y)\in\mathbb{X}\times\mathbb{X}$ is called a coupled fixed point 
of a mapping $F:\mathbb{X}\times\mathbb{X}\rightarrow\mathbb{X}$ if $F(x,y)=x$ and $F(y,x)=y$.\\
In \cite{BL2006} Bhaskar and Lakshmikantham presented the following coupled fixed point theorem using the mixed monotone property in partial ordered metric spaces and applied it to solve periodic boundary value problem.
\begin{theorem}  \label{lemma 5}(\cite{BL2006}) 
	Let $(\mathbb{X},\leq)$ be a partially ordered set and $d$ be a metric on $X$. Let the map $F: \mathbb{X} \times \mathbb{X} \rightarrow \mathbb{X}$ be continuous and mixed monotone on $\mathbb{X}$. Assume that there exists a $\delta \in [0,1)$ with 
	\begin{equation} \label{eq:2}
	d(F(x,y),~F(u,v)) \leq \frac{\delta}{2} [d(x,u)+d(y,v)],
	\end{equation}
	for all $x \geq u$ and $y \leq v$. Suppose also that 
	\begin{enumerate}
		\item[(i)]  there exist $x_0,~y_0 \in \mathbb{X}$ such that $x_0 \leq F(x_0,y_0)$ and $y_0 \geq F(y_0,x_0)$;
		\item[(ii)]  every pair of elements has either a lower bound or an upper bound.
	\end{enumerate}
	Then there exists a unique $\bar{x} \in \mathbb{X}$ such that $\bar{x}=F(\bar{x},\bar{x})$. Moreover, the sequences $\{x_k\}$ and $\{y_k\}$ generated by $x_{k+1}=F(x_k,y_k)$ and $y_{k+1}=F(y_k,x_k)$ converge to $\bar{x}$, with the following estimate
	\begin{equation*}
	\max \{d(x_k,\bar{x}),d(y_k,\bar{x})\} \leq \frac{\delta^k}{1-\delta} \max \{d(x_1,x_0), d(y_1,y_0)\}.
	\end{equation*}
\end{theorem}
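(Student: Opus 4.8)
The plan is to follow the classical two-sequence (coupled) iteration scheme, reducing the problem to the product space $\mathbb{X}\times\mathbb{X}$ equipped with the order $(x,y)\preceq(u,v)\iff x\leq u,\ y\geq v$, under which $F$ behaves like an order-preserving contraction. First I would fix $x_0,y_0$ as in hypothesis (i) and define the iterates $x_{k+1}=F(x_k,y_k)$ and $y_{k+1}=F(y_k,x_k)$. Using the mixed monotone property together with $x_0\leq F(x_0,y_0)$ and $y_0\geq F(y_0,x_0)$, an induction shows that $\{x_k\}$ is non-decreasing and $\{y_k\}$ is non-increasing, i.e. $x_{k-1}\leq x_k$ and $y_{k-1}\geq y_k$ for every $k$.

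With the ordering of consecutive terms in hand, I would apply the contraction \eqref{eq:2} in each coordinate. Because $x_k\geq x_{k-1}$ and $y_k\leq y_{k-1}$, the hypotheses of \eqref{eq:2} are met both for the pair $(x_k,y_k),(x_{k-1},y_{k-1})$ and (after using the symmetry of $d$) for the pair generating $y_{k+1}$ and $y_k$. Setting $M_k:=\max\{d(x_{k+1},x_k),\,d(y_{k+1},y_k)\}$, each estimate is bounded by $\frac{\delta}{2}\bigl(d(x_k,x_{k-1})+d(y_k,y_{k-1})\bigr)\leq\delta M_{k-1}$, so that $M_k\leq\delta M_{k-1}\leq\delta^k M_0$ with $M_0=\max\{d(x_1,x_0),d(y_1,y_0)\}$. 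Summing the geometric tail shows that $\{x_k\}$ and $\{y_k\}$ are Cauchy (for which completeness of $(\mathbb{X},d)$ is needed, as in Theorem~\ref{lemma 7}), hence convergent, say $x_k\to\bar{x}$ and $y_k\to\bar{y}$; continuity of $F$ then passes to the limit in the recursions to give $\bar{x}=F(\bar{x},\bar{y})$ and $\bar{y}=F(\bar{y},\bar{x})$, so $(\bar{x},\bar{y})$ is a coupled fixed point. The same bound, summed from index $k$, yields the stated error estimate $\max\{d(x_k,\bar{x}),d(y_k,\bar{y})\}\leq\frac{\delta^k}{1-\delta}M_0$.

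It remains to prove that the coupled fixed point is unique, which I expect to be the main obstacle and is where hypothesis (ii) enters; note that uniqueness automatically yields $\bar{x}=\bar{y}$, since $(\bar{y},\bar{x})$ is also a coupled fixed point and must therefore coincide with $(\bar{x},\bar{y})$. Let $(x^*,y^*)$ be any coupled fixed point. When $(\bar{x},\bar{y})$ and $(x^*,y^*)$ are $\preceq$-comparable, a direct application of \eqref{eq:2} in both coordinates gives $d(\bar{x},x^*)+d(\bar{y},y^*)\leq\delta\bigl(d(\bar{x},x^*)+d(\bar{y},y^*)\bigr)$, forcing equality since $\delta<1$; in particular, taking $(x^*,y^*)=(\bar{y},\bar{x})$ recovers $\bar{x}=\bar{y}$ whenever $\bar{x},\bar{y}$ are comparable. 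The delicate case is when the two points are not comparable. Here I would invoke (ii) to interpolate through a common bound: the key lemma is that $\preceq$-comparable points have coupled iterates that contract together, the sum of their coordinate distances decaying like $\delta^n$. Choosing, via (ii), an element comparable to both $(\bar{x},\bar{y})$ and $(x^*,y^*)$ and iterating from it, one shows the resulting sequence converges simultaneously to both limits, whence uniqueness follows from uniqueness of limits. The crux — and the point demanding the most care — is verifying that hypothesis (ii) genuinely furnishes, for any two points of $\mathbb{X}\times\mathbb{X}$, an element comparable to both in the product order, since an upper (or lower) bound chosen coordinate-wise need not be simultaneously comparable in the product sense.
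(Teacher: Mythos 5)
First, a point of context: the paper does not prove this theorem at all — it is quoted verbatim from \cite{BL2006} as a tool — so your attempt can only be judged against the standard argument of Bhaskar--Lakshmikantham. For existence, convergence and the error bound, your outline is exactly that argument: monotonicity of the iterates via hypothesis (i) and mixed monotonicity, the recursion $M_k\le\delta M_{k-1}$ from \eqref{eq:2}, Cauchyness and passage to the limit by continuity, then summation of the geometric tail. You are also right to flag that completeness of $(\mathbb{X},d)$ is needed and is missing from the statement as quoted.

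The uniqueness part, however, has a genuine gap, and it is precisely the step you name at the end without closing. Two observations show it is not merely a matter of care. First, two distinct diagonal points $(\bar{x},\bar{x})$ and $(x^*,x^*)$ are \emph{never} comparable in the product order $(x,y)\preceq(u,v)\iff x\le u,\ y\ge v$, since comparability would force $\bar{x}\le x^*$ and $\bar{x}\ge x^*$ simultaneously; so the ``comparable case'' you treat first is vacuous for the stated claim and the entire burden falls on the interpolation step. Second, hypothesis (ii) genuinely fails to supply the interpolating element: take $\mathbb{X}=\{p,q,r\}$ with $p\le r$, $q\le r$ and $p,q$ incomparable with no lower bound; every pair has an upper bound, yet the only elements of $\mathbb{X}\times\mathbb{X}$ comparable to $(p,p)$ are $(p,p),(r,p),(p,r)$, while those comparable to $(q,q)$ are $(q,q),(r,q),(q,r)$, so no element is comparable to both. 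Consequently the route ``apply a Ran--Reurings type result such as Theorem \ref{lemma 7} to $T(x,y)=(F(x,y),F(y,x))$ on $\mathbb{X}\times\mathbb{X}$'' does not go through under (ii) alone. In \cite{BL2006}, uniqueness of the coupled fixed point is proved under a comparability hypothesis imposed directly on $\mathbb{X}\times\mathbb{X}$, whereas a condition like (ii) is used only to show $\bar{x}=\bar{y}$ for the constructed coupled fixed point. To finish a proof of the statement as written you must either strengthen (ii) to the product-space condition or argue differently; for instance, if $x^*=F(x^*,x^*)$ satisfies $x_0\le x^*\le y_0$, mixed monotonicity gives $x_n\le x^*\le y_n$ for all $n$, and then \eqref{eq:2} yields $\max\{d(x_n,x^*),d(y_n,x^*)\}\le\delta\max\{d(x_{n-1},x^*),d(y_{n-1},x^*)\}\to 0$, so $x^*=\bar{x}$ — but reducing an arbitrary diagonal fixed point to this sandwiched situation again requires bounds of both kinds, which (ii) does not guarantee.
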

Coupled fixed point theorem is one of the most heavily used tools to solve nonlinear matrix equations. In this setting Liu and Chen \cite{LC2011}, Berzig et al. \cite{BDS2012}, Hasanov \cite{H2017}, Asgari and Mousavi \cite{AM2015} and many more used this tool to compute the solutions of different groups of nonlinear matrix equations.

With the above discussion in mind, in this article, we consider nonlinear matrix equations of the form (\ref{eqi:1}). We present several necessary and sufficient conditions for the existence of a Hermitian positive definite solution of nonlinear matrix equation (\ref{eqi:1}). With the help of Theorem \ref{lemma 7}, Theorem \ref{lemma 5}, we derive some algorithms to compute the solutions. We also discuss about the maximal and the minimal Hermitian positive definite solution of (\ref{eqi:1}). Finally, to illustrate the scenarios, we provide with some examples.

Throughout this article we denote $\mathcal{H}(n) $ by the set of all $n\times{n}$ Hermitian positive definite matrices and denote $\mathcal{K}(n)$ by the set of all  $n\times{n}$ Hermitian positive semidefinite matrices. We write $A\geq{B}$ (or $A>B$) for $A,B\in\mathcal{H}(n)$ if $A-B\in\mathcal{K}(n)$ (or $A-B\in\mathcal{P}(n)$). In particular, we write $A\geq{0}$ (or $A>0$) if  $A\in\mathcal{K}(n)$ (or $A\in\mathcal{P}(n)$). We denote maximum eigenvalue of a matrix by $\lambda_1(.)$ and minimum eigenvalue by $\lambda_n(.)$. We use $\|.\|$ as spectral norm. We also denote spectral radius of a matrix $A$ by $\rho(A)$.   

Next we give some notable results which we use further in this article.

\begin{lemma}  \label{lemma 2} (\cite{F1998})
	Let $A$ and $B$ be positive operators on a Hilbert space $\mathcal{H}$ such that $M_1I \geq A \geq m_1I>0$, $M_2I \geq B \geq m_2I>0$ and $0< A \leq B$. Then for all $r \geq 1$
	\begin{equation*}
	A^r \leq \Big(\frac{M_1}{m_1}\Big)^{r-1}B^r, ~~A^r \leq \Big(\frac{M_2}{m_2}\Big)^{r-1}B^r.
	\end{equation*}
\end{lemma}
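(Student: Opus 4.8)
The plan is to reduce this operator inequality to two elementary one-sided spectral estimates and then chain them through the hypothesis $A \leq B$. The conceptual obstacle to keep in mind is that the map $t \mapsto t^r$ is \emph{not} operator monotone for $r>1$, so $A \leq B$ does not by itself give $A^r \leq B^r$; the factors $(M_1/m_1)^{r-1}$ and $(M_2/m_2)^{r-1}$ are exactly the correction needed to repair this failure, and the whole point of the proof is to produce them.

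First I would record two facts that follow directly from the spectral theorem applied to a \emph{single} operator. If a positive operator $C$ satisfies $C \leq MI$, then every spectral value $\lambda$ of $C$ lies in $(0,M]$, so $\lambda^r = \lambda^{r-1}\lambda \leq M^{r-1}\lambda$ because $r-1 \geq 0$; by functional calculus this yields $C^r \leq M^{r-1} C$. Dually, if $C \geq mI > 0$, then every spectral value satisfies $\lambda^{r-1} \geq m^{r-1}$, whence $C^{r-1} \geq m^{r-1} I$ and therefore $C^r = C^{1/2} C^{r-1} C^{1/2} \geq m^{r-1} C$, i.e. $C \leq m^{1-r} C^r$. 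Note that neither estimate needs operator monotonicity of $t^{r-1}$: each only compares powers of one operator with a scalar multiple of itself, so both are pure spectral statements.

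For the first inequality I would apply the upper estimate to $A$ with the bound $A \leq M_1 I$ to get $A^r \leq M_1^{r-1} A$, then use $A \leq B$ to pass to $A^r \leq M_1^{r-1} B$. Since $A \leq B$ together with $A \geq m_1 I$ forces $B \geq m_1 I$, the lower estimate applied to $B$ gives $B \leq m_1^{1-r} B^r$, and combining these two order-preserving inequalities produces $A^r \leq (M_1/m_1)^{r-1} B^r$. For the second inequality the same scheme is run with the bounds attached to $B$: from $A \leq B \leq M_2 I$ one has $A \leq M_2 I$, hence $A^r \leq M_2^{r-1} A \leq M_2^{r-1} B$, while $B \geq m_2 I$ gives $B \leq m_2^{1-r} B^r$, and together these yield $A^r \leq (M_2/m_2)^{r-1} B^r$.

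The only delicate point is the bookkeeping of the spectral bounds: one must notice that $A \leq B$ transports the lower bound on $A$ up to $B$ and the upper bound on $B$ down to $A$, so that each target constant can be assembled from a single coherent pair of bounds rather than a mixture. Apart from that, every step is either the spectral theorem for one operator or the elementary facts that $X \leq Y$ implies $T^*XT \leq T^*YT$ and $cX \leq cY$ for $c \geq 0$; no deep inequality of Furuta type is required, and I expect the main effort to lie in identifying the two spectral estimates rather than in any hard analysis.
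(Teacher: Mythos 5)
Your proof is correct. Note first that the paper does not prove this lemma at all: it is quoted verbatim from Furuta \cite{F1998}, so there is no in-paper argument to compare against. Your argument is a valid, self-contained replacement. The two spectral estimates $C\leq MI\Rightarrow C^r\leq M^{r-1}C$ and $C\geq mI\Rightarrow C\leq m^{1-r}C^r$ are indeed pure functional-calculus statements about a single operator (both sides are functions of $C$, so one only needs the scalar inequalities $\lambda^r\leq M^{r-1}\lambda$ and $\lambda\leq m^{1-r}\lambda^r$ on the spectrum), and the chain $A^r\leq M_1^{r-1}A\leq M_1^{r-1}B\leq M_1^{r-1}m_1^{1-r}B^r$ is a legitimate transitive chain of Loewner inequalities; the transport of bounds ($A\geq m_1I\Rightarrow B\geq m_1I$ and $B\leq M_2I\Rightarrow A\leq M_2I$) is exactly right and is the only place where $A\leq B$ enters besides the middle link. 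This is genuinely different from Furuta's route, which derives such bounds from the H\"older--McCarthy and Kantorovich inequalities; that machinery buys sharper constants (the generalized Kantorovich constant $K(m,M,r)$, which is strictly smaller than $(M/m)^{r-1}$), whereas your elementary chaining buys a proof that needs nothing beyond the spectral theorem and congruence-monotonicity of the order, and it delivers precisely the constant $(M/m)^{r-1}$ that the paper actually uses. The parenthetical step $C^r=C^{1/2}C^{r-1}C^{1/2}\geq m^{r-1}C$ is correct but superfluous, since the inequality already follows from applying the scalar inequality to the spectrum of $C$.
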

A norm $|||\cdot|||$ on $\mathcal{M}(n)$ is called unitarily invariant norm \cite{Bhatia} if 
\begin{equation}
|||UAV|||=|||A|||,
\end{equation}
for all $A$ and for all unitary matrices $U$ and $V$. Spectral norm is a unitarily invariant norm.

\begin{lemma}  \label{lemma 6}(\cite{Bhatia})
	Let $A$, $B$ and $C \in \mathcal{M}(n)$, set of all $n \times n$ matrices. Then for all unitarily invariant norm $||| \cdot~|||$,
	$|||BAC||| \leq \|B\|~|||A|||~\|C\|$.
\end{lemma}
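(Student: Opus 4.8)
The plan is to deduce the two-sided estimate from two one-sided estimates together with a single elementary lemma about contractions. First I would observe that it suffices to establish the left-hand bound
\[
|||BX||| \le \|B\|\,|||X|||
\]
for all $B,X \in \mathcal{M}(n)$, together with its right-hand analogue $|||XC||| \le |||X|||\,\|C\|$. Chaining these two (the left bound with $X = AC$, then the right bound with $X = A$) immediately yields $|||BAC||| \le \|B\|\,|||AC||| \le \|B\|\,|||A|||\,\|C\|$, which is precisely the assertion. So the entire content reduces to the two one-sided inequalities.

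The heart of the argument is the contraction lemma: if $\|B\| \le 1$, then $|||BA||| \le |||A|||$. To prove it I would invoke the singular value decomposition $B = W\Sigma V^*$, with $W,V$ unitary and $\Sigma = \mathrm{diag}(\sigma_1,\dots,\sigma_n)$ where $0 \le \sigma_j \le 1$. Writing each singular value as $\sigma_j = \tfrac12\bigl(e^{i\theta_j}+e^{-i\theta_j}\bigr)$ with $\theta_j = \arccos\sigma_j$ exhibits $\Sigma$, and hence $B$, as the average of two unitaries: setting $D = \mathrm{diag}(e^{i\theta_j})$ one has $\Sigma = \tfrac12(D + D^*)$, so $B = \tfrac12(U_1 + U_2)$ with $U_1 = W D V^*$ and $U_2 = W D^* V^*$ both unitary. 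The triangle inequality for $|||\cdot|||$ together with the unitary invariance $|||U_k A||| = |||A|||$ then gives
\[
|||BA||| \le \tfrac12|||U_1 A||| + \tfrac12|||U_2 A||| = |||A|||.
\]

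With the contraction lemma in hand, the one-sided bounds follow by homogeneity. For $B \neq 0$ I would apply the lemma to the contraction $B/\|B\|$, obtaining $|||BA||| = \|B\|\,|||(B/\|B\|)A||| \le \|B\|\,|||A|||$, while the case $B = 0$ is trivial. The right-hand bound $|||AC||| \le |||A|||\,\|C\|$ is proved identically, decomposing $C/\|C\|$ as an average of unitaries and invoking the right unitary invariance $|||AU_k||| = |||A|||$ instead.

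I expect the only genuinely substantive step to be the contraction lemma, and within it the verification that every contraction is an average of two unitaries; everything else (the reduction to one-sided bounds, the scaling, and the final chaining) is routine bookkeeping. An alternative route would replace this lemma by the monotonicity of unitarily invariant norms with respect to the singular-value ordering (von Neumann's symmetric gauge function theorem) combined with the inequality $\sigma_j(BA) \le \|B\|\,\sigma_j(A)$. I would nonetheless prefer the averaging argument above, since it is self-contained and relies only on the triangle inequality and the unitary invariance that are already built into the definition of $|||\cdot|||$ recalled earlier.
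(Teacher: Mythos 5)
Your proposal is correct. Note, however, that the paper offers no proof of this lemma at all: it is stated as a quoted result from Bhatia's \emph{Matrix Analysis}, so there is nothing in the paper to compare your argument against line by line. On its own merits, your argument is complete and sound: the reduction to the two one-sided bounds, the scaling step, and the chaining are all routine, and the key step --- writing a contraction $B=W\Sigma V^{*}$ with $\sigma_{j}=\cos\theta_{j}=\tfrac12(e^{i\theta_{j}}+e^{-i\theta_{j}})$ as the average $\tfrac12(WDV^{*}+WD^{*}V^{*})$ of two unitaries and then applying the triangle inequality together with unitary invariance --- is a classical and fully rigorous device. It is worth pointing out that the proof in the cited source runs along the other route you mention: one first shows the singular value inequalities $s_{j}(BAC)\leq\|B\|\,\|C\|\,s_{j}(A)$ and then invokes the monotonicity of symmetric gauge functions (Fan dominance). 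That route generalizes more readily to weak majorization statements, whereas your averaging argument has the advantage of using nothing beyond the triangle inequality and the invariance property $|||UAV|||=|||A|||$ that defines the class of norms in question; either is acceptable here.
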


\begin{lemma} \label{lemma 3} (\cite{Bhatia})
	If $0< \theta \leq 1$, and $P$ and $Q$ are Hermitian positive definite matrices of the same order with $P,~Q \geq bI > 0$, then for every unitarily invariant norm $|||P^\theta-Q^\theta||| \leq \theta b^{\theta -1}|||P-Q|||$ and $|||P^{-\theta}-Q^{-\theta}||| \leq \theta b^{-(\theta +1)} |||P-Q|||$. 
\end{lemma}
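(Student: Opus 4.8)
The plan is to treat the case $0<\theta<1$ in detail (the case $\theta=1$ is immediate, since both claimed inequalities reduce to $|||P-Q|||\le|||P-Q|||$) and to exploit the Cauchy--L\"owner integral representations of the operator monotone functions $t\mapsto t^{\theta}$ and $t\mapsto t^{-\theta}$. Concretely, I would start from
\begin{equation*}
t^{\theta}=\frac{\sin(\theta\pi)}{\pi}\int_0^\infty \frac{t}{t+\lambda}\,\lambda^{\theta-1}\,d\lambda,\qquad t^{-\theta}=\frac{\sin(\theta\pi)}{\pi}\int_0^\infty \frac{\lambda^{-\theta}}{t+\lambda}\,d\lambda,
\end{equation*}
valid for $t>0$ and $0<\theta<1$, and substitute the positive definite matrices $P,Q$ for the scalar $t$ via the functional calculus. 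This turns the differences $P^{\theta}-Q^{\theta}$ and $P^{-\theta}-Q^{-\theta}$ into integrals of differences of resolvents.

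The key algebraic step is to rewrite each integrand in a sandwiched form so that Lemma \ref{lemma 6} applies. Using $\tfrac{t}{t+\lambda}=1-\tfrac{\lambda}{t+\lambda}$ together with the resolvent identity $(Q+\lambda)^{-1}-(P+\lambda)^{-1}=(Q+\lambda)^{-1}(P-Q)(P+\lambda)^{-1}$, one obtains
\begin{equation*}
P^{\theta}-Q^{\theta}=\frac{\sin(\theta\pi)}{\pi}\int_0^\infty \lambda^{\theta}\,(Q+\lambda)^{-1}(P-Q)(P+\lambda)^{-1}\,d\lambda,
\end{equation*}
and similarly
\begin{equation*}
P^{-\theta}-Q^{-\theta}=\frac{\sin(\theta\pi)}{\pi}\int_0^\infty \lambda^{-\theta}\,(P+\lambda)^{-1}(Q-P)(Q+\lambda)^{-1}\,d\lambda.
\end{equation*}
Then, applying the triangle inequality for operator-valued integrals followed by Lemma \ref{lemma 6} to each integrand, and using the hypothesis $P,Q\ge bI$ to bound the resolvents by $\|(P+\lambda)^{-1}\|,\,\|(Q+\lambda)^{-1}\|\le (b+\lambda)^{-1}$, pulls the fixed factor $|||P-Q|||$ out of the integral and leaves purely scalar integrals in front of it.

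It then remains to evaluate those scalar integrals, which is where the precise constants $\theta b^{\theta-1}$ and $\theta b^{-(\theta+1)}$ arise. After the substitution $\lambda=bu$ they reduce to Beta integrals,
\begin{equation*}
\int_0^\infty \frac{\lambda^{\theta}}{(b+\lambda)^2}\,d\lambda=b^{\theta-1}B(\theta+1,1-\theta),\qquad \int_0^\infty \frac{\lambda^{-\theta}}{(b+\lambda)^2}\,d\lambda=b^{-(\theta+1)}B(1-\theta,1+\theta),
\end{equation*}
and both Beta values equal $\theta\,\Gamma(\theta)\Gamma(1-\theta)=\theta\pi/\sin(\theta\pi)$ by $\Gamma(x+1)=x\Gamma(x)$ and the reflection formula $\Gamma(\theta)\Gamma(1-\theta)=\pi/\sin(\theta\pi)$. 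Multiplying by the prefactor $\sin(\theta\pi)/\pi$ cancels everything except $\theta b^{\theta-1}$ and $\theta b^{-(\theta+1)}$, respectively, which is exactly the claim. I expect the main obstacle to be technical rather than conceptual: justifying the interchange of the unitarily invariant norm with the operator-valued integral (equivalently, the integral form of the triangle inequality for $|||\cdot|||$) and verifying absolute convergence of the integrands near $\lambda=0$ and $\lambda=\infty$. Once the sandwiched representation is in place, Lemma \ref{lemma 6} does the real work and the sharp constant drops out of the Beta--Gamma computation.
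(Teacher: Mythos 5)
The paper offers no proof of this lemma: it is quoted verbatim from Bhatia's \emph{Matrix Analysis} as a known perturbation bound, so there is nothing internal to compare against. Your reconstruction is correct and is in substance the standard (Bhatia-style) argument for operator monotone functions: the Cauchy--L\"owner representations you quote are right, the resolvent identity does put both differences into the sandwiched form $(\,\cdot\,)^{-1}(P-Q)(\,\cdot\,)^{-1}$ to which Lemma \ref{lemma 6} applies, the bound $\|(P+\lambda)^{-1}\|\le (b+\lambda)^{-1}$ follows from $P\ge bI$, and the Beta--Gamma evaluation does produce exactly the constants $\theta b^{\theta-1}$ and $\theta b^{-(\theta+1)}$; the integrands are absolutely integrable at both endpoints precisely for $0<\theta<1$, and the integral triangle inequality for a unitarily invariant norm is standard. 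The only slip is your parenthetical dismissal of $\theta=1$: the second inequality then reads $|||P^{-1}-Q^{-1}|||\le b^{-2}|||P-Q|||$, which is \emph{not} the tautology $|||P-Q|||\le|||P-Q|||$; it still follows in one line from $P^{-1}-Q^{-1}=P^{-1}(Q-P)Q^{-1}$, Lemma \ref{lemma 6}, and $\|P^{-1}\|,\|Q^{-1}\|\le b^{-1}$, but that line should be stated rather than waved away.
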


\begin{lemma} \label{lemma 4} (\cite{Bhatia})
	If $A \geq B > 0$ (or $A>B>0$), then $A^\alpha \geq B^\alpha>0$ (or $A^\alpha > B^\alpha>0$) for all $\alpha \in (0,1]$, and $B^\alpha \geq A^\alpha >0$ (or $B^\alpha > A^\alpha >0$) for all $\alpha \in [-1,0)$. And if $s \geq 1$ then $A \geq B \Rightarrow A^s \geq B^s$ if $A$ and $B$ commute.
\end{lemma}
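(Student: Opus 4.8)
The plan is to reduce the entire statement to a single elementary fact—that inversion reverses the order on positive definite matrices—and then to propagate this fact to arbitrary fractional powers by means of an integral representation. So first I would establish the building block: if $A \geq B > 0$, then $A^{-1} \leq B^{-1}$. Since $B>0$, the congruence $X \mapsto B^{-1/2} X B^{-1/2}$ preserves positivity, so $A \geq B$ gives $B^{-1/2} A B^{-1/2} \geq I$. An operator dominating $I$ has all eigenvalues $\geq 1$, hence its inverse satisfies $B^{1/2} A^{-1} B^{1/2} \leq I$; conjugating once more by $B^{-1/2}$ yields $A^{-1} \leq B^{-1}$. Under the strict hypothesis $A>B>0$ the same chain produces strict inequalities throughout, so $A^{-1}<B^{-1}$. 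This already settles the endpoint $\alpha=-1$.

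Next, for $\alpha \in (0,1)$ I would invoke the integral representation
\[
t^\alpha = \frac{\sin \alpha\pi}{\pi}\int_0^\infty \frac{\lambda^{\alpha-1}\,t}{\lambda+t}\,d\lambda \qquad (t>0),
\]
which, applied through the functional calculus to a positive definite matrix, reads
\[
A^\alpha = \frac{\sin \alpha\pi}{\pi}\int_0^\infty \lambda^{\alpha-1}\bigl(I-\lambda(\lambda I+A)^{-1}\bigr)\,d\lambda.
\]
Now if $A \geq B > 0$, then for each $\lambda>0$ we have $\lambda I + A \geq \lambda I + B > 0$, so the building block gives $(\lambda I+A)^{-1}\leq(\lambda I+B)^{-1}$, whence $I-\lambda(\lambda I+A)^{-1}\geq I-\lambda(\lambda I+B)^{-1}$. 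Because $\sin\alpha\pi>0$ and $\lambda^{\alpha-1}>0$ on $(0,\infty)$, integrating this pointwise operator inequality against the positive weight preserves it, giving $A^\alpha \geq B^\alpha$; the case $\alpha=1$ is trivial, and $\alpha>0$ forces $A^\alpha>0$. If $A>B>0$ the integrand difference $\lambda\bigl((\lambda I+B)^{-1}-(\lambda I+A)^{-1}\bigr)$ is strictly positive for every $\lambda$, and integrating a continuous family of strictly positive matrices against the positive weight yields a strictly positive matrix, so $A^\alpha>B^\alpha$. This proves the first assertion.

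For $\alpha \in [-1,0)$ I would write $\alpha=-\beta$ with $\beta\in(0,1]$ and simply chain the two facts already proved: $A\geq B>0$ gives $A^\beta\geq B^\beta>0$ by the first assertion, and then the inverse-reversing building block applied to the pair $A^\beta\geq B^\beta$ gives $B^{-\beta}\geq A^{-\beta}>0$, i.e.\ $B^\alpha\geq A^\alpha>0$, with strictness propagating at each step. Finally, for the commuting case with $s\geq 1$, since $A$ and $B$ are commuting Hermitian matrices they are simultaneously unitarily diagonalizable, say $A=UD_AU^*$ and $B=UD_BU^*$ with $D_A,D_B$ diagonal; the hypothesis $A\geq B$ is then equivalent to the entrywise inequality of the diagonals, and the scalar inequality $a^s\geq b^s$ for $a\geq b\geq 0$ (valid since $s\geq1$) applied coordinatewise gives $D_A^s\geq D_B^s$, hence $A^s=UD_A^sU^*\geq UD_B^sU^*=B^s$.

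I expect the main obstacle to be the careful justification that the integral representation passes correctly through the functional calculus and, in particular, that integration against the positive weight preserves the \emph{strict} operator inequality—one must note that on any compact subinterval the continuous, strictly positive integrand is bounded below by a positive multiple of $I$, so the integral dominates a positive multiple of $I$. The remaining steps are either routine congruence manipulations or, in the commuting case, a direct reduction to the scalar inequality.
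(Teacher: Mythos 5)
Your proof is correct. The paper itself gives no argument for this lemma --- it is quoted as a known preliminary from Bhatia's \emph{Matrix Analysis} --- so there is nothing to compare against; your write-up is essentially the standard textbook proof: operator antitonicity of inversion by congruence with $B^{-1/2}$, the L\"owner--Heinz inequality for $\alpha\in(0,1)$ via the integral representation of $t^\alpha$, the case $\alpha\in[-1,0)$ by composing the two, and the commuting case by simultaneous diagonalization (where the needed scalar inequality $a^s\geq b^s$ uses $b>0$, which holds in context). Your handling of the strict inequalities, including the compact-subinterval lower bound needed to keep the integral strictly positive, is the right point to be careful about and is handled correctly.
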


\section{Main Results}
Consider the nonlinear matrix equation 
\begin{equation}\label{eq:1}
X^s+A^*X^{-t}A+B^*X^{-p}B=Q,
\end{equation}
where  $s,t,p \geq 1$, $A,B \in GL(n)$ and $Q \in \mathcal{P}(n)$.\\
Let $X^s=Y$. Then the equation (\ref{eq:1}) reduces to 
\begin{equation}\label{equ:2}
Y+A^*Y^{-\frac{t}{s}}A+B^*Y^{-\frac{p}{s}}B=Q,
\end{equation}
where $s,t,p \geq 1$, $A,B \in GL(n)$, $Q \in \mathcal{P}(n)$.
Therefore if $X$ is a Hermitian positive definite solution of (\ref{eq:1}), then $X^s$ is a Hermitian positive definite solution of (\ref{equ:2}). Also if $Y$ is a Hermitian positive definite solution of (\ref{equ:2}), then $Y^\frac{1}{s}$ is a Hermitian positive definite solution of (\ref{eq:1}). Thus we get the following theorem. 
\begin{theorem}
Equation (\ref{eq:1}) has a Hermitian positive definite solution if and only if equation (\ref{equ:2}) has a Hermitian positive definite solution.	
\end{theorem}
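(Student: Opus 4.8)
The plan is to prove the two implications by means of the substitution $Y = X^s$, exploiting that on the cone $\mathcal{P}(n)$ the map $X \mapsto X^s$ is a bijection with inverse $Y \mapsto Y^{1/s}$, and that arbitrary real powers of a Hermitian positive definite matrix obey the usual scalar exponent laws.

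First I would record the two functional-calculus facts that make the substitution legitimate. For $X \in \mathcal{P}(n)$ with spectral decomposition $X = U \Lambda U^*$ ($U$ unitary, $\Lambda$ diagonal with strictly positive diagonal entries) and any real exponent $r$, the matrix $X^r := U \Lambda^r U^*$ is again Hermitian positive definite, and from the corresponding scalar identities one obtains $(X^a)^b = X^{ab}$. In particular $X^s \in \mathcal{P}(n)$ with $(X^s)^{1/s} = X$, and conversely, for $Y \in \mathcal{P}(n)$ the root $X := Y^{1/s}$ lies in $\mathcal{P}(n)$ with $X^s = Y$. Uniqueness of the positive definite $s$-th root is what guarantees that these two operations are mutually inverse.

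Next I would carry out the forward direction. Assuming $X \in \mathcal{P}(n)$ solves (\ref{eq:1}), put $Y = X^s \in \mathcal{P}(n)$. Using $X = Y^{1/s}$ together with the exponent laws gives $X^{-t} = Y^{-t/s}$ and $X^{-p} = Y^{-p/s}$, so substituting into (\ref{eq:1}) yields $Y + A^* Y^{-t/s} A + B^* Y^{-p/s} B = X^s + A^* X^{-t} A + B^* X^{-p} B = Q$, whence $Y$ solves (\ref{equ:2}). The reverse direction is symmetric: starting from a solution $Y \in \mathcal{P}(n)$ of (\ref{equ:2}), set $X = Y^{1/s} \in \mathcal{P}(n)$, note $X^s = Y$, $X^{-t} = Y^{-t/s}$, $X^{-p} = Y^{-p/s}$, and read the same identity backwards to conclude that $X$ solves (\ref{eq:1}).

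There is no genuine obstacle here, since the statement is a change-of-variable equivalence. The only point that deserves to be stated explicitly rather than glossed over is the well-definedness and invertibility of the fractional-power substitution on $\mathcal{P}(n)$, that is, that $X \mapsto X^s$ and $Y \mapsto Y^{1/s}$ are inverse bijections of $\mathcal{P}(n)$ and that the identity $(X^a)^b = X^{ab}$ holds for the possibly non-integer exponents $t/s$ and $p/s$. Once these facts are in place, both implications follow immediately.
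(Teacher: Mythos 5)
Your proposal is correct and follows exactly the same route as the paper, which establishes the equivalence via the substitution $Y = X^s$ and its inverse $X = Y^{1/s}$ on $\mathcal{P}(n)$; the paper simply states these two implications without elaborating the functional-calculus details you spell out. No discrepancy to report.
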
 

Next we give some necessary conditions for the existence of Hermitian positive definite solution of (\ref{eq:1}). 
\begin{theorem}\label{theorem:1}
	Let $k={\lambda}_1(Q)\leq 1$. If equation (\ref{eq:1}) has a Hermitian positive definite solution then $\rho^2(A)< \frac{q^q}{(q+1)^{(q+1)}}$ and $\rho^2(B)< \frac{q^q}{(q+1)^{(q+1)}}$, where $q=\min\{\frac{t}{s}, \frac{p}{s}\}$.
\end{theorem}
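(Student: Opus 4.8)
The plan is to exploit the positivity of the three summands in \eqref{equ:2} (the equivalent equation obtained from \eqref{eq:1} via $Y=X^s$) to isolate a two-term inequality, and then to collapse the resulting matrix inequality to a one-variable scalar inequality by testing it against a cleverly chosen vector. Write $q_1=\frac{t}{s}$ and $q_2=\frac{p}{s}$, so that a Hermitian positive definite solution $X$ of \eqref{eq:1} yields $Y=X^s\in\mathcal{P}(n)$ satisfying $Y+A^*Y^{-q_1}A+B^*Y^{-q_2}B=Q$. Since $B\in GL(n)$ and $Y^{-q_2}>0$, the term $B^*Y^{-q_2}B$ is positive definite, whence $Y+A^*Y^{-q_1}A<Q$. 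I would then fix an eigenvalue $\lambda$ of $A$ with $|\lambda|=\rho(A)$ together with a unit eigenvector $u$, $Au=\lambda u$; testing the last inequality against $u$ and using $Q\le\lambda_1(Q)I=kI$ gives
\[
u^*Yu+\rho(A)^2\,u^*Y^{-q_1}u<k,
\]
because $u^*A^*Y^{-q_1}Au=(Au)^*Y^{-q_1}(Au)=|\lambda|^2u^*Y^{-q_1}u=\rho(A)^2u^*Y^{-q_1}u$. The point of choosing the spectral-radius eigenvector is precisely to make $\rho(A)^2$ appear, rather than the cruder $\|A\|^2$ that a generic test vector would produce.

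The key technical step is to relate $u^*Y^{-q_1}u$ to $u^*Yu$. Diagonalising $Y$ and writing $u=\sum_ic_ie_i$ in an orthonormal eigenbasis with eigenvalues $\mu_i>0$ and $\sum_i|c_i|^2=1$, the map $x\mapsto x^{-q_1}$ is convex on $(0,\infty)$ (as $q_1>0$), so Jensen's inequality gives
\[
u^*Y^{-q_1}u=\sum_i|c_i|^2\mu_i^{-q_1}\ge\Big(\sum_i|c_i|^2\mu_i\Big)^{-q_1}=(u^*Yu)^{-q_1}.
\]
Setting $\beta=u^*Yu>0$, I obtain the scalar inequality $\beta+\rho(A)^2\beta^{-q_1}<k$. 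The minimum over $\beta>0$ of $f(\beta)=\beta+\rho(A)^2\beta^{-q_1}$ is attained at $\beta^*=(q_1\rho(A)^2)^{1/(q_1+1)}$ and equals $(q_1+1)\,q_1^{-q_1/(q_1+1)}\rho(A)^{2/(q_1+1)}$; since $f(\beta)<k$ for the particular $\beta$ just constructed, this minimum is also strictly less than $k$.

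Solving the resulting inequality for $\rho(A)^2$ and using $0<k\le1$ yields
\[
\rho(A)^2<\frac{k^{\,q_1+1}\,q_1^{\,q_1}}{(q_1+1)^{q_1+1}}\le\frac{q_1^{\,q_1}}{(q_1+1)^{q_1+1}},
\]
and the identical argument applied to the splitting $Y+B^*Y^{-q_2}B<Q$ gives $\rho(B)^2<q_2^{\,q_2}/(q_2+1)^{q_2+1}$. Finally, the function $h(x)=x^{x}/(x+1)^{x+1}$ is strictly decreasing on $(0,\infty)$, since its logarithmic derivative is $\ln\frac{x}{x+1}<0$; hence with $q=\min\{q_1,q_2\}$ one has $h(q_1),h(q_2)\le h(q)$, and both desired bounds $\rho^2(A)<h(q)$ and $\rho^2(B)<h(q)$ follow. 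I expect the main obstacle to be exactly the compression of the matrix inequality into this sharp scalar form: the two ideas that make it work are testing against the spectral-radius eigenvector (to recover $\rho(A)^2$) and the Jensen convexity bound linking $u^*Y^{-q_1}u$ and $u^*Yu$; once these are in place, the minimisation in $\beta$ and the monotonicity of $h$ are routine.
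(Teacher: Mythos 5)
Your proof is correct, and its skeleton coincides with the paper's: pass to $Y=X^s$, discard the positive term $B^*Y^{-p/s}B$ to obtain $Y+A^*Y^{-t/s}A<Q$, test this against a unit eigenvector of $A$ so that $|\lambda_A|^2$ (hence $\rho^2(A)$) appears, reduce to a scalar inequality $\beta+|\lambda_A|^2\beta^{-\bullet}<k\le 1$, and minimise over $\beta>0$. You differ in two local devices, both sound. First, to compress $u^*Yu+|\lambda_A|^2\,u^*Y^{-t/s}u$ to a one-variable expression you invoke the Jensen bound $u^*Y^{-t/s}u\ge (u^*Yu)^{-t/s}$ coming from convexity of $x\mapsto x^{-t/s}$, whereas the paper bounds the quadratic form below by $\lambda_n\big(Y+|\lambda_A|^2Y^{-t/s}\big)=\min_i\big(\lambda_i+|\lambda_A|^2\lambda_i^{-t/s}\big)$ using that $Y$ and $Y^{-t/s}$ are simultaneously diagonalisable; both land at $\min_{x>0}\big(x+|\lambda_A|^2x^{-\bullet}\big)<k$. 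Second, the paper converts the exponent $t/s$ to $q=\min\{t/s,\,p/s\}$ at the outset, using $0<\lambda_i<1$ (from $Y<Q\le I$) to write $\lambda_i^{-t/s}\ge\lambda_i^{-q}$, while you keep $t/s$ throughout, obtain the sharper intermediate bound $\rho^2(A)<k^{\frac{t}{s}+1}h(\tfrac{t}{s})$ with $h(x)=x^x/(x+1)^{x+1}$, and only then pass to $q$ via the (correctly verified) strict monotonicity of $h$. Your route records a marginally stronger statement before the final weakening and does not need the eigenvalues of $Y$ to lie below $1$, only $k\le 1$ at the last step; the paper's version is slightly shorter. Your choice of the spectral-radius eigenvector is also an inessential refinement: as in the paper, the bound holds for every eigenvalue of $A$, so any eigenvector would do.
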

\begin{proof}
Let $X$ be a Hermitian positive definite solution of (\ref{eq:1}). Then $Y=X^s$ is a Hermitian positive definite solution of (\ref{equ:2}). Therefore $Y=Q-A^*Y^{-\frac{t}{s}}A-B^*Y^{-\frac{p}{s}}B< Q$. Also since ${\lambda}_1(Q)\leq 1$, then $Y< Q\leq I$.\\
Let $\lambda_A$ be any eigenvalue of $A$, and $e_A$ be the corresponding unit eigenvector of $\lambda _A$. Multiplying from left in both side of (\ref{equ:2}) by $e_A^*$ and from right by $e_A$, we have
	\begin{equation*}
	\begin{split}
	&~~~~e_A^*Ye_A+e_A^*A^*Y^{-\frac{t}{s}}Ae_A+e_A^*B^*Y^{-\frac{p}{s}}Be_A=e_A^*Qe_A\\
	&\Rightarrow e_A^*Ye_A+|\lambda_A|^2e_A^*Y^{-\frac{t}{s}}e_A=e_A^*Qe_A-e_A^*B^*Y^{-\frac{p}{s}}Be_A\\
	&\Rightarrow e_A^*\big(Y+|\lambda_A|^2Y^{-\frac{t}{s}}\big)e_A< e_A^*Qe_A\leq I.
	\end{split}
	\end{equation*}
	Thus we have
	\begin{equation}\label{1}
	{\lambda}_n\big(Y+|\lambda_A|^2Y^{-\frac{t}{s}}\big)I\leq e_A^*\big(Y+|\lambda_A|^2Y^{-\frac{t}{s}}\big)e_A< I.
	\end{equation}
	Let  $q=\min\big\{\frac{t}{s}, \frac{p}{s}\big\}$ and $\lambda_i,~ i=1,..,n$ be the  eigenvalues of $Y$. Then for each $i=1,2,..,n$, $0<\lambda_i< 1$ and 
	\begin{equation*}
	\begin{split}
	{\lambda}_n\big(Y+|\lambda_A|^2Y^{-\frac{t}{s}}\big)&=\min_{1\leq i\leq n}\big(\lambda_i+|\lambda_A|^2{\lambda_i}^{-\frac{t}{s}}\big)\\
	&\geq \min_{1\leq i\leq n}\big(\lambda_i+|\lambda_A|^2{\lambda_i}^{-q}\big).
	\end{split}
	\end{equation*}
    Let $f(x)=x+|\lambda_A|^2x^{-q}$. Then $f'(x)=1-q|\lambda_A|^2x^{-(q+1)}$ and $f''(x)=q(q+1)|\lambda_A|^2x^{-q-2}$. Thus $f''(x)>0$, for $x>0$. Therefore, $\min f(x)=f\big((q|\lambda_A|^2)^\frac{1}{q+1}\big)=\frac{(q+1)|\lambda_A|^\frac{2}{q+1}}{q^\frac{q}{q+1}}$. As $0<\lambda_i< 1$, we have 
    \begin{equation*}
    \min_{1\leq i\leq n}\big(\lambda_i+|\lambda_A|^2{\lambda_i}^{-q}\big)\geq \frac{(q+1)|\lambda_A|^\frac{2}{q+1}}{q^\frac{q}{q+1}}.
    \end{equation*}
Thus, from (\ref{1}) we have 
\begin{equation*}
\begin{split}
&~~~\frac{(q+1)|\lambda_A|^\frac{2}{q+1}}{q^\frac{q}{q+1}}I\leq {\lambda}_n\big(Y+|\lambda_A|^2Y^{-\frac{t}{s}}\big)I< I\\
&\Rightarrow \frac{(q+1)|\lambda_A|^\frac{2}{q+1}}{q^\frac{q}{q+1}} < 1\\
&\Rightarrow \rho^2(A)=|\lambda_A|^2< \frac{q^q}{(q+1)^{(q+1)}}.
\end{split} 
\end{equation*}
Similarly we have 
\begin{equation*}
\rho^2(B)=|\lambda_B|^2< \frac{q^q}{(q+1)^{(q+1)}}.
\end{equation*}
\end{proof}
 
Let $k= {\lambda}_1(Q)>1$. Then the equation (\ref{eq:1}) can be written as 
\begin{equation*}
X^s+A^*X^{-t}A+B^*X^{-p}B=k\tilde{Q},
\end{equation*}
where $\tilde{Q}\leq I$ and $Q=k\tilde{Q}$, which implies that
\begin{equation}\label{equ:2.1}
k^{-1}X^s+k^{-\frac{1}{2}}A^*X^{-t}Ak^{-\frac{1}{2}}+k^{-\frac{1}{2}}B^*X^{-p}Bk^{-\frac{1}{2}}=\tilde{Q}.
\end{equation}
Let $k^{-\frac{1}{s}}X=\tilde{X}$, $\tilde{A}=k^{-\frac{1}{2}(\frac{t}{s}+1)}A$ and $\tilde{B}=k^{-\frac{1}{2}(\frac{p}{s}+1)}B$. Then the equation (\ref{equ:2.1}) reduces to
\begin{equation}\label{equ:3}
\tilde{X}^s+\tilde{A}^*\tilde{X}^{-t}\tilde{A}+\tilde{B}^*\tilde{X}^{-p}\tilde{B}=\tilde{Q},
\end{equation}	
where $s,t,p \geq 1$ $\tilde{A},\tilde{B} \in GL(n)$, $\tilde{Q} \in \mathcal{P}(n)$ with $\lambda_1 (\tilde{Q})\leq 1$.\\
Thus by using Theorem \ref{theorem:1} we conclude that if equation (\ref{equ:3}) has a Hermitian positive definite solution then $\rho^2({\tilde{A}})<\frac{q^q}{(q+1)^{(q+1)}}$ and $\rho^2({\tilde{B}})<\frac{q^q}{(q+1)^{(q+1)}}$, where $q=\min\{\frac{t}{s}, \frac{p}{s}\}$, implies that $\rho^2({A})<\frac{q^qk^{(1+\frac{t}{s})}}{(q+1)^{(q+1)}}\leq\frac{q^qk^{(1+\tilde{q})}}{(q+1)^{(q+1)}}$ and $\rho^2({B})<\frac{q^qk^{(1+\frac{p}{s})}}{(q+1)^{(q+1)}}\leq\frac{q^qk^{(1+\tilde{q})}}{(q+1)^{(q+1)}}$, where $\tilde{q}=\max\{\frac{t}{s},\frac{p}{s}\}$. Thus we have the following theorem.
\begin{theorem}\label{theorem:2}
	Let $k={\lambda}_1(Q)> 1$. If equation (\ref{eq:1}) has a Hermitian positive definite solution then $\rho^2(A)< \frac{q^qk^{(1+\tilde{q})}}{(q+1)^{(q+1)}}$ and $\rho^2(B)< \frac{q^qk^{(1+\tilde{q})}}{(q+1)^{(q+1)}}$, where $q=\min\{\frac{t}{s}, \frac{p}{s}\}$ and $\tilde{q}=\max\{\frac{t}{s}, \frac{p}{s}\}$.
\end{theorem}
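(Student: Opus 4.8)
The plan is to reduce the case $\lambda_1(Q) > 1$ to the already-settled case $\lambda_1(Q) \leq 1$ of Theorem \ref{theorem:1} by a scaling (homogenization) argument. Writing $k = \lambda_1(Q) > 1$ and $\tilde{Q} = k^{-1}Q$, one has $\lambda_1(\tilde{Q}) = k^{-1}\lambda_1(Q) = 1 \leq 1$, so $\tilde{Q} \in \mathcal{P}(n)$ satisfies the hypothesis of Theorem \ref{theorem:1}. The idea is to rescale the unknown $X$ together with the coefficient matrices $A,B$ so that, after dividing \eqref{eq:1} through by $k$, the equation takes exactly the form \eqref{equ:3} with $\tilde{Q}$ in place of $Q$.

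First I would introduce the substitution $\tilde{X} = k^{-\frac{1}{s}}X$, $\tilde{A} = k^{-\frac{1}{2}(\frac{t}{s}+1)}A$, and $\tilde{B} = k^{-\frac{1}{2}(\frac{p}{s}+1)}B$, and verify that these turn \eqref{eq:1} into \eqref{equ:3}. The computation is pure bookkeeping of exponents: from $X^s = k\tilde{X}^s$ and $X^{-t} = k^{-\frac{t}{s}}\tilde{X}^{-t}$ one gets $k^{-1}X^s = \tilde{X}^s$ and $k^{-1}A^*X^{-t}A = k^{-(1+\frac{t}{s})}A^*\tilde{X}^{-t}A = \tilde{A}^*\tilde{X}^{-t}\tilde{A}$, and similarly for the $B$-term, reproducing the intermediate form \eqref{equ:2.1}. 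Here I would also record that $\tilde{X}$ is Hermitian positive definite exactly when $X$ is (since $k > 0$) and that $\tilde{A},\tilde{B} \in GL(n)$, so existence of an HPD solution of \eqref{eq:1} is equivalent to existence of one for \eqref{equ:3}.

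Next I would apply Theorem \ref{theorem:1} to the normalized equation \eqref{equ:3}, for which $\lambda_1(\tilde{Q}) \leq 1$. This yields $\rho^2(\tilde{A}) < \frac{q^q}{(q+1)^{(q+1)}}$ and $\rho^2(\tilde{B}) < \frac{q^q}{(q+1)^{(q+1)}}$ with $q = \min\{\frac{t}{s},\frac{p}{s}\}$. The final step is to undo the scaling: since $\rho(\tilde{A}) = k^{-\frac{1}{2}(1+\frac{t}{s})}\rho(A)$, this gives $\rho^2(A) < k^{(1+\frac{t}{s})}\frac{q^q}{(q+1)^{(q+1)}}$, and because $k > 1$ and $\frac{t}{s} \leq \tilde{q} = \max\{\frac{t}{s},\frac{p}{s}\}$, the right-hand side is at most $\frac{q^q k^{(1+\tilde{q})}}{(q+1)^{(q+1)}}$; the bound for $B$ follows identically using $\frac{p}{s} \leq \tilde{q}$.

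I do not expect a serious obstacle, as the argument is essentially a change of variables. The only place demanding care is confirming that the exponents defining $\tilde{A}$ and $\tilde{B}$ are chosen precisely so that the scalar factors $k^{-(1+\frac{t}{s})}$ and $k^{-(1+\frac{p}{s})}$ are absorbed cleanly into $\tilde{A}^*(\cdot)\tilde{A}$ and $\tilde{B}^*(\cdot)\tilde{B}$, respectively. The passage from the sharper $\frac{t}{s},\frac{p}{s}$ exponents to the uniform exponent $\tilde{q}$ uses only monotonicity of $k \mapsto k^r$ for $k > 1$, which weakens the estimate slightly in exchange for a single uniform expression.
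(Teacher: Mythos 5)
Your proposal is correct and follows essentially the same route as the paper: the authors also normalize by $k=\lambda_1(Q)$, introduce exactly the substitutions $\tilde{X}=k^{-\frac{1}{s}}X$, $\tilde{A}=k^{-\frac{1}{2}(\frac{t}{s}+1)}A$, $\tilde{B}=k^{-\frac{1}{2}(\frac{p}{s}+1)}B$ to reach equation (\ref{equ:3}) with $\lambda_1(\tilde{Q})\leq 1$, apply Theorem \ref{theorem:1}, and then relax the exponents $\frac{t}{s},\frac{p}{s}$ to $\tilde{q}$ using $k>1$. No discrepancies to report.
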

Now we give some sufficient conditions for the existence of Hermitian positive definite solution of (\ref{eq:1}).
\begin{theorem}\label{theorem:3}
	Let $k={\lambda}_1(Q)\leq 1$. Then equation (\ref{eq:1}) has a Hermitian positive definite solution in $[(\frac{q\tilde{k}}{q+1})^\frac{1}{s}I,Q^\frac{1}{s}]$ if $||A||^2+||B||^2< \frac{q^{\tilde{q}}\tilde{k}^{(\tilde{q}+1)}}{(q+1)^{(\tilde{q}+1)}}$, where $\tilde{k}={\lambda}_n(Q)$, $q=\min\{\frac{t}{s}, \frac{p}{s}\}$ and $\tilde{q}=\max\{\frac{t}{s}, \frac{p}{s}\}$.
\end{theorem}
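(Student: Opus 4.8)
The plan is to exhibit the solution as a fixed point of the map
\[
\mathcal{G}(Y)=Q-A^*Y^{-t/s}A-B^*Y^{-p/s}B
\]
on the order interval $\mathcal{J}=[\beta I,\,Q]$, where $\beta=\frac{q\tilde{k}}{q+1}$, and then to invoke the Ran--Reurings theorem (Theorem \ref{lemma 7}). I work in the variable $Y$ rather than $X$ because, via the substitution $Y=X^s$ that reduces (\ref{eq:1}) to (\ref{equ:2}), the exponents $t/s,\,p/s$ match the quantities $q,\tilde{q}$ of the statement; once a solution $\bar Y\in\mathcal{J}$ of (\ref{equ:2}) is found, Lemma \ref{lemma 4} places $\bar X=\bar Y^{1/s}$ in $[(\frac{q\tilde{k}}{q+1})^{1/s}I,\,Q^{1/s}]$ and solves (\ref{eq:1}). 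I equip $\mathcal{J}$ with the spectral-norm metric $d(Y_1,Y_2)=\|Y_1-Y_2\|$ and the order $\geq$ of the Introduction. Since $\mathcal{J}$ is a closed, bounded set of Hermitian matrices it is complete, and every pair of its elements has the common lower bound $\beta I$ and upper bound $Q$, so the structural hypotheses of Theorem \ref{lemma 7} are met.

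The first substantive step is the self-map property. The upper bound is immediate, as $A^*Y^{-t/s}A$ and $B^*Y^{-p/s}B$ are positive semidefinite, so $\mathcal{G}(Y)\leq Q$. For the lower bound I would use $Y\geq\beta I$ with Lemma \ref{lemma 4} to get $Y^{-t/s}\leq\beta^{-t/s}I$ and $Y^{-p/s}\leq\beta^{-p/s}I$, hence $A^*Y^{-t/s}A\leq\beta^{-t/s}\|A\|^2I$ and likewise for $B$. Because $\tilde{k}\leq k\leq 1$ forces $\beta<1$, and $t/s,p/s\leq\tilde{q}$, both exponents absorb into $\beta^{-\tilde{q}}$, giving $A^*Y^{-t/s}A+B^*Y^{-p/s}B\leq\beta^{-\tilde{q}}(\|A\|^2+\|B\|^2)I$. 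Here the hypothesis enters exactly, since $\frac{q^{\tilde{q}}\tilde{k}^{\tilde{q}+1}}{(q+1)^{\tilde{q}+1}}=\frac{\tilde{k}}{q+1}\beta^{\tilde{q}}$; it drives the right-hand side below $\frac{\tilde{k}}{q+1}I$, and with $Q\geq\tilde{k}I$ this yields $\mathcal{G}(Y)\geq\tilde{k}I-\frac{\tilde{k}}{q+1}I=\beta I$. Monotonicity is routine: $Y_1\leq Y_2$ gives $Y_1^{-t/s}\geq Y_2^{-t/s}$ and $Y_1^{-p/s}\geq Y_2^{-p/s}$ by Lemma \ref{lemma 4}, so $\mathcal{G}$ is order-preserving and continuous; taking $Y_0=\beta I$ gives $Y_0\leq\mathcal{G}(Y_0)$, which is condition (2) of Theorem \ref{lemma 7}.

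The decisive step is the contraction estimate. For comparable $Y_1\geq Y_2$ in $\mathcal{J}$ I would write $\mathcal{G}(Y_1)-\mathcal{G}(Y_2)=A^*(Y_2^{-t/s}-Y_1^{-t/s})A+B^*(Y_2^{-p/s}-Y_1^{-p/s})B$ and apply Lemma \ref{lemma 6} with the triangle inequality to get
\[
\|\mathcal{G}(Y_1)-\mathcal{G}(Y_2)\|\leq\|A\|^2\,\|Y_1^{-t/s}-Y_2^{-t/s}\|+\|B\|^2\,\|Y_1^{-p/s}-Y_2^{-p/s}\|.
\]
Lemma \ref{lemma 3} bounds each negative-power difference by $\theta\beta^{-(\theta+1)}\|Y_1-Y_2\|$ with $b=\beta$ and $\theta=t/s$ or $p/s$, producing the Lipschitz constant $c=\frac{t}{s}\beta^{-(t/s+1)}\|A\|^2+\frac{p}{s}\beta^{-(p/s+1)}\|B\|^2$. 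Once $c<1$ is verified, Theorem \ref{lemma 7} gives a unique fixed point $\bar Y\in\mathcal{J}$ with $\mathcal{G}^n(Y)\to\bar Y$, completing the argument as above.

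The main obstacle is precisely this contraction estimate. Lemma \ref{lemma 3} is stated only for $\theta\in(0,1]$, whereas $t/s,p/s$ may exceed $1$; I would bridge this by writing $Y^{-\theta}=(Y^{-1})^{\theta}$ and combining the case $\theta=1$ of Lemma \ref{lemma 3} (which yields $\|Y_1^{-1}-Y_2^{-1}\|\leq\beta^{-2}\|Y_1-Y_2\|$) with the standard Lipschitz bound $\|Z_1^{\theta}-Z_2^{\theta}\|\leq\theta\,\beta^{-(\theta-1)}\|Z_1-Z_2\|$ for positive powers $\theta\geq 1$ on $[I,\beta^{-1}I]$ (cf. Bhatia), reproducing the constant $\theta\beta^{-(\theta+1)}$. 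The more delicate point is confirming $c<1$: bounding both summands by the larger coefficient $\tilde{q}\beta^{-(\tilde{q}+1)}$ only gives $c<\tilde{q}/q$, so the verification must retain the individual exponents $t/s,p/s$ and exploit the strictness of the hypothesis together with $\beta<1$. Reconciling this constant with the stated bound is the crux of the proof.
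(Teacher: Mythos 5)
Your construction of the map $\mathcal{G}(Y)=Q-A^*Y^{-t/s}A-B^*Y^{-p/s}B$ on the interval $[\beta I,Q]$ with $\beta=\frac{q\tilde{k}}{q+1}$, and your verification that $\mathcal{G}$ maps this interval into itself, coincide exactly with the paper's argument (including the identity $\frac{q^{\tilde{q}}\tilde{k}^{\tilde{q}+1}}{(q+1)^{\tilde{q}+1}}=\frac{\tilde{k}}{q+1}\beta^{\tilde{q}}$ that drives the lower bound). The gap is in the final step: you route the conclusion through the Ran--Reurings theorem, which requires a contraction estimate, and that estimate is not available under the stated hypothesis. As you yourself compute, the best bound the hypothesis yields is $c<\tilde{q}/q$, and this cannot be repaired by ``retaining the individual exponents'': taking $t/s=\tilde{q}$, letting $\|B\|\to 0$ and $\|A\|^2$ approach the threshold $\frac{\tilde{k}}{q+1}\beta^{\tilde{q}}$, the leading term $\frac{t}{s}\beta^{-(t/s+1)}\|A\|^2$ tends to $\tilde{q}\beta^{-1}\frac{\tilde{k}}{q+1}=\tilde{q}/q$, which exceeds $1$ whenever $t>p$ (or more generally whenever $\tilde{q}>q$). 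So the Lipschitz constant genuinely fails to be less than $1$ in the generality of the theorem, and the argument as proposed does not close.

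The paper avoids this entirely: since the theorem claims only \emph{existence} (not uniqueness or an iteration), it applies Brouwer's fixed point theorem to the continuous self-map $\mathcal{G}$ of the compact convex set $\bigl[\beta I,Q\bigr]$, which requires no contraction at all. Your self-map computation is precisely the input Brouwer needs, so the fix is simply to replace the Ran--Reurings step with Brouwer; the detour through Lemma \ref{lemma 3} (and the extension of it to exponents $\theta>1$) then becomes unnecessary. If you want a contraction-based proof with uniqueness, you would need to strengthen the hypothesis, as the paper does in its later Theorems \ref{thm 2}, \ref{thm 3} and \ref{thm 6}.
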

\begin{proof}
First notice that $||A||^2+||B||^2< \frac{q^{\tilde{q}}\tilde{k}^{(\tilde{q}+1)}}{(q+1)^{(\tilde{q}+1)}}$ implies
\begin{equation*}
\begin{split}
 \rho^2(A)+\rho^2(B)\leq ||A||^2+||B||^2&<\frac{q^{\tilde{q}}\tilde{k}^{(\tilde{q}+1)}}{(q+1)^{(\tilde{q}+1)}}\\
 &=\frac{q^q}{(q+1)^{(q+1)}}\tilde{k}^{(\tilde{q}+1)}\Big(\frac{q}{q+1}\Big)^{(\tilde{q}-q)}\\
 &<\frac{q^q}{(q+1)^{(q+1)}}, ~~~~~~(\textmd{ as  }\tilde{k}<1 \textmd{ and }\tilde{q}\geq q). 
\end{split}
\end{equation*}

Now to prove our claim, first we show that if $||A||^2+||B||^2< \frac{q^{\tilde{q}}\tilde{k}^{(\tilde{q}+1)}}{(q+1)^{(\tilde{q}+1)}}$ then the equation (\ref{equ:2}) has a solution in $\Big[\big(\frac{q\tilde{k}}{q+1}\big)I,Q\Big]$. Let $f(Y)=Q-A^*Y^{-\frac{t}{s}}A-B^*Y^{-\frac{p}{s}}B$. Then $f$ is continuous in $\Big[\big(\frac{q\tilde{k}}{q+1}\big)I,Q\Big]$ and for any  $Y\in\Big[\big(\frac{q\tilde{k}}{q+1}\big)I,Q\Big]$, we have $f(Y)\leq Q$ and 
\begin{equation*}
\begin{split}
f(Y)&=Q-A^*Y^{-\frac{t}{s}}A-B^*Y^{-\frac{p}{s}}B\\
&\geq \tilde{k}I-\Big(\frac{q\tilde{k}}{q+1}\Big)^{-\frac{t}{s}}A^*A-\Big(\frac{q\tilde{k}}{q+1}\Big)^{-\frac{p}{s}}B^*B\\
&\geq \tilde{k}I-\Big(\frac{q\tilde{k}}{q+1}\Big)^{-\tilde{q}}A^*A-\Big(\frac{q\tilde{k}}{q+1}\Big)^{-\tilde{q}}B^*B,~~ \Big(\textmd{as ~~}\frac{q\tilde{k}}{q+1}<1 \textmd{~~and ~~}\tilde{q}=\max\Big\{\frac{t}{s},\frac{p}{s}\Big\}\Big)\\
&=\tilde{k}I-\Big(\frac{q\tilde{k}}{q+1}\Big)^{-\tilde{q}}\Big[A^*A+B^*B\Big]\\
&>\tilde{k}I-\Big(\frac{q\tilde{k}}{q+1}\Big)^{-\tilde{q}}\frac{q^{\tilde{q}}\tilde{k}^{(\tilde{q}+1)}}{(q+1)^{(\tilde{q}+1)}}I\\
&=\Big(\frac{q\tilde{k}}{q+1}\Big)I.
\end{split}
\end{equation*}
Thus $f$ maps $\Big[\big(\frac{q\tilde{k}}{q+1}\big)I,Q\Big]$ into itself. Therefore by using Brouwer's fixed point theorem we conclude that $f$ has fixed point $\bar{Y}$ in  $\Big[\big(\frac{q\tilde{k}}{q+1}\big)I,Q\Big]$, which is in fact a solution of equation (\ref{equ:2}).\\
Now, if $\bar{Y}\in\Big[\big(\frac{q\tilde{k}}{q+1}\big)I,Q\Big]$ is a solution of (\ref{equ:2}), then  ${\bar{Y}}^\frac{1}{s}\in\Big[\big(\frac{q\tilde{k}}{q+1}\big)^\frac{1}{s}I,Q^\frac{1}{s}\Big]$ is a solution of (\ref{eq:1}). Therefore, if  $||A||^2+||B||^2< \frac{q^{\tilde{q}}\tilde{k}^{(\tilde{q}+1)}}{(q+1)^{(\tilde{q}+1)}}$ then the equation (\ref{eq:1}) has a solution in $\Big[\big(\frac{q\tilde{k}}{q+1}\big)^\frac{1}{s}I,Q^\frac{1}{s}\Big]$. 
\end{proof}

\begin{theorem}
		Let $k={\lambda}_1(Q)>1$. Then equation (\ref{eq:1}) has a Hermitian positive definite solution in $[\big(\frac{q\tilde{k}}{k(q+1)}\big)^\frac{1}{s}I,Q^\frac{1}{s}]$ if $||A||^2+||B||^2< \frac{q^{\tilde{q}}\tilde{k}^{(\tilde{q}+1)}}{k^{\tilde{q}}(q+1)^{(\tilde{q}+1)}}$, where $\tilde{k}={\lambda}_n(Q)$, $q=\min\{\frac{t}{s}, \frac{p}{s}\}$ and $\tilde{q}=\max\{\frac{t}{s}, \frac{p}{s}\}$.
\end{theorem}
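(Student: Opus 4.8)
The plan is to reduce the case $k=\lambda_1(Q)>1$ to the already-settled case $k\le 1$ by the very substitution introduced just before Theorem \ref{theorem:2}, and then invoke Theorem \ref{theorem:3}. Concretely, with $\tilde X=k^{-\frac{1}{s}}X$, $\tilde A=k^{-\frac{1}{2}(\frac{t}{s}+1)}A$, $\tilde B=k^{-\frac{1}{2}(\frac{p}{s}+1)}B$ and $\tilde Q=k^{-1}Q$, equation \eqref{eq:1} transforms into an equation \eqref{equ:3} of exactly the same type, and now $\lambda_1(\tilde Q)=k^{-1}\lambda_1(Q)=1\le 1$, so Theorem \ref{theorem:3} is applicable to \eqref{equ:3}. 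Since $X=k^{\frac{1}{s}}\tilde X$ is a bijection between the Hermitian positive definite solutions of \eqref{eq:1} and those of \eqref{equ:3} (scaling by a positive scalar preserves positive definiteness), it suffices to produce a solution of \eqref{equ:3} in the interval supplied by Theorem \ref{theorem:3} and then rescale.

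First I would check that the stated hypothesis on $A,B$ forces the hypothesis of Theorem \ref{theorem:3} for the tilded data. Since $\lambda_n(\tilde Q)=k^{-1}\tilde k$, that theorem demands $\|\tilde A\|^2+\|\tilde B\|^2<\frac{q^{\tilde q}(k^{-1}\tilde k)^{\tilde q+1}}{(q+1)^{\tilde q+1}}=\frac{q^{\tilde q}\tilde k^{\tilde q+1}}{k^{\tilde q+1}(q+1)^{\tilde q+1}}$. The key elementary estimate is that, because $k>1$ and $\frac{t}{s},\frac{p}{s}>0$, one has $\|\tilde A\|^2=k^{-(\frac{t}{s}+1)}\|A\|^2\le k^{-1}\|A\|^2$ and likewise $\|\tilde B\|^2\le k^{-1}\|B\|^2$, so $\|\tilde A\|^2+\|\tilde B\|^2\le k^{-1}(\|A\|^2+\|B\|^2)$. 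Substituting the assumption $\|A\|^2+\|B\|^2<\frac{q^{\tilde q}\tilde k^{\tilde q+1}}{k^{\tilde q}(q+1)^{\tilde q+1}}$ then yields precisely the bound $\frac{q^{\tilde q}\tilde k^{\tilde q+1}}{k^{\tilde q+1}(q+1)^{\tilde q+1}}$ required for \eqref{equ:3}.

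With the hypothesis verified, Theorem \ref{theorem:3} produces a solution $\bar{\tilde X}$ of \eqref{equ:3} in $\big[(\frac{q\lambda_n(\tilde Q)}{q+1})^{\frac{1}{s}}I,\tilde Q^{\frac{1}{s}}\big]=\big[(\frac{q\tilde k}{k(q+1)})^{\frac{1}{s}}I,(k^{-1}Q)^{\frac{1}{s}}\big]$. Rescaling via $\bar X=k^{\frac{1}{s}}\bar{\tilde X}$ gives a solution of \eqref{eq:1}: the upper endpoint becomes $k^{\frac{1}{s}}(k^{-1}Q)^{\frac{1}{s}}=Q^{\frac{1}{s}}$, while the lower endpoint becomes $(\frac{q\tilde k}{q+1})^{\frac{1}{s}}I$, which dominates $(\frac{q\tilde k}{k(q+1)})^{\frac{1}{s}}I$ because $k>1$. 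Hence $\bar X$ lies in the claimed interval $\big[(\frac{q\tilde k}{k(q+1)})^{\frac{1}{s}}I,Q^{\frac{1}{s}}\big]$.

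I expect the only point requiring care to be the bookkeeping of the powers of $k$ across the two reductions (the norm condition and the endpoints of the localizing interval), together with confirming $\lambda_n(\tilde Q)=k^{-1}\tilde k$ and $\lambda_1(\tilde Q)\le 1$ so that Theorem \ref{theorem:3} genuinely applies; the crude inequalities $k^{-\frac{t}{s}},k^{-\frac{p}{s}}\le 1$ are exactly what let the single factor $k^{-1}$ bridge the hypothesis' $k^{\tilde q}$ and the $k^{\tilde q+1}$ demanded by Theorem \ref{theorem:3}. As an alternative that avoids the substitution, one could re-run the Brouwer fixed point argument of Theorem \ref{theorem:3} directly on $f(Y)=Q-A^*Y^{-\frac{t}{s}}A-B^*Y^{-\frac{p}{s}}B$ over $\big[\frac{q\tilde k}{k(q+1)}I,Q\big]$, the only new verifications being $\frac{q\tilde k}{k(q+1)}<1$ (immediate since $\tilde k\le k$) and that this constant still satisfies $f(Y)\ge\frac{q\tilde k}{k(q+1)}I$, an inequality that simplifies to $q(k-1)\ge 0$.
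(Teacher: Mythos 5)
Your proposal is correct, but your primary route differs from the paper's. The paper proves this theorem by re-running the Brouwer fixed-point argument of Theorem \ref{theorem:3} directly on $f(Y)=Q-A^*Y^{-\frac{t}{s}}A-B^*Y^{-\frac{p}{s}}B$ over the interval $\big[\frac{q\tilde{k}}{k(q+1)}I,Q\big]$, checking the self-mapping property via the chain $f(Y)\geq \tilde{k}I-\big(\frac{q\tilde{k}}{k(q+1)}\big)^{-\tilde{q}}(A^*A+B^*B)>\big(\frac{q\tilde{k}}{q+1}\big)I>\big(\frac{q\tilde{k}}{k(q+1)}\big)I$ --- exactly the alternative you sketch in your last sentence, including the observation that the final comparison reduces to $k>1$. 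Your main argument instead reuses the normalization $\tilde{X}=k^{-\frac{1}{s}}X$, $\tilde{A}=k^{-\frac{1}{2}(\frac{t}{s}+1)}A$, $\tilde{B}=k^{-\frac{1}{2}(\frac{p}{s}+1)}B$, $\tilde{Q}=k^{-1}Q$ that the paper introduced only for the necessary condition (Theorem \ref{theorem:2}), and then invokes Theorem \ref{theorem:3} as a black box; the bookkeeping is sound: $\|\tilde{A}\|^2+\|\tilde{B}\|^2\leq k^{-1}(\|A\|^2+\|B\|^2)$ because $k^{-\frac{t}{s}},k^{-\frac{p}{s}}\leq 1$, which bridges the $k^{\tilde{q}}$ in the hypothesis to the $k^{\tilde{q}+1}$ demanded for the scaled data, and the rescaled interval lands inside the claimed one. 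What your reduction buys is that no fixed-point computation has to be repeated, and it even yields the slightly sharper localization $\bar{X}\geq\big(\frac{q\tilde{k}}{q+1}\big)^{\frac{1}{s}}I$; what the paper's direct computation buys is a self-contained proof that avoids tracking powers of $k$ through two changes of variables.
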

\begin{proof}
	The proof is similar to the proof of Theorem \ref{theorem:3}. Also notice that $||A||^2+||B||^2< \frac{q^{\tilde{q}}{\tilde{k}}^{(\tilde{q}+1)}}{k^{\tilde{q}}(q+1)^{(\tilde{q}+1)}}$ implies
	\begin{equation*}
	\begin{split}
	\rho^2(A)+\rho^2(B)\leq ||A||^2+||B||^2&<\frac{q^{\tilde{q}}{\tilde{k}}^{(\tilde{q}+1)}}{k^{\tilde{q}}(q+1)^{(\tilde{q}+1)}}\\
	&=\frac{q^q}{(q+1)^{(q+1)}}k^{(1+\tilde{q})}\Big(\frac{\tilde{k}}{k}\Big)^{(\tilde{q}+1)}\frac{1}{k^{\tilde{q}}}\Big(\frac{q}{q+1}\Big)^{(\tilde{q}-q)}\\
	&<\frac{q^q}{(q+1)^{(q+1)}}k^{(1+\tilde{q})}, ~~~~~~(\textmd{ as }k> 1, k\geq\tilde{k} \textmd{ and }\tilde{q}\geq q).
	\end{split}
	\end{equation*}
	
	 Let $f(Y)=Q-A^*Y^{-\frac{t}{s}}A-B^*Y^{-\frac{p}{s}}B$. Then $f$ is continuous in $\Big[\big(\frac{q\tilde{k}}{k(q+1)}\big)I,Q\Big]$ and for any  $Y\in\Big[\big(\frac{q\tilde{k}}{k(q+1)}\big)I,Q\Big]$, we have $f(Y)\leq Q$ and 
	\begin{equation*}
	\begin{split}
	f(Y)&=Q-A^*Y^{-\frac{t}{s}}A-B^*Y^{-\frac{p}{s}}B\\
	&\geq \tilde{k}I-\Big(\frac{q\tilde{k}}{k(q+1)}\Big)^{-\frac{t}{s}}A^*A-\Big(\frac{q\tilde{k}}{k(q+1)}\Big)^{-\frac{p}{s}}B^*B\\
	&\geq \tilde{k}I-\Big(\frac{q\tilde{k}}{k(q+1)}\Big)^{-\tilde{q}}A^*A-\Big(\frac{q\tilde{k}}{k(q+1)}\Big)^{-\tilde{q}}B^*B,\\~ 
	& \Big(\textmd{as ~}\frac{q\tilde{k}}{k(q+1)}<1 \textmd{~and ~}\tilde{q}=\max\Big\{\frac{t}{s},\frac{p}{s}\Big\}\Big)\\
	&=\tilde{k}I-\Big(\frac{q\tilde{k}}{k(q+1)}\Big)^{-\tilde{q}}\Big[A^*A+B^*B\Big]\\
	&>\tilde{k}I-\Big(\frac{q\tilde{k}}{k(q+1)}\Big)^{-\tilde{q}}\frac{q^{\tilde{q}}\tilde{k}^{(\tilde{q}+1)}}{k^{\tilde{q}}(q+1)^{(\tilde{q}+1)}}I\\
	&=\Big(\frac{q\tilde{k}}{q+1}\Big)I>\Big(\frac{q\tilde{k}}{k(q+1)}\Big)I.
	\end{split}
	\end{equation*} 
	Thus $f$ maps $\Big[\big(\frac{q\tilde{k}}{k(q+1)}\big)I,Q\Big]$ into itself. Therefore by using Brouwer's fixed point theorem we conclude that $f$ has fixed point $\bar{Y}$ in  $\Big[\big(\frac{q\tilde{k}}{k(q+1)}\big)I,Q\Big]$, which is in fact a solution of equation (\ref{equ:2}).\\
	Now, if $\bar{Y}\in\Big[\big(\frac{q\tilde{k}}{k(q+1)}\big)I,Q\Big]$ is a solution of (\ref{equ:2}), then  ${\bar{Y}}^\frac{1}{s}\in\Big[\big(\frac{q\tilde{k}}{k(q+1)}\big)^\frac{1}{s}I,Q^\frac{1}{s}\Big]$ is a solution of (\ref{eq:1}). Therefore, if $||A||^2+||B||^2< \frac{q^{\tilde{q}}\tilde{k}^{(\tilde{q}+1)}}{k^{\tilde{q}}(q+1)^{(\tilde{q}+1)}}$, then the equation (\ref{eq:1}) has a solution in $\Big[\big(\frac{q\tilde{k}}{k(q+1)}\big)^\frac{1}{s}I,Q^\frac{1}{s}\Big]$. 
\end{proof}

Note that if (\ref{eq:1}) has a Hermitian positive definite solution $X$, then 
\begin{equation*}
\begin{split}
&~~~~A^*X^{-t}A < X^s+A^*X^{-t}A+B^*X^{-p}B=Q\\
&\Rightarrow X^{-t} < A^{-*}QA^{-1} \hspace{.3in}(\textmd{since }A \leq B \Rightarrow D^*AD \leq D^*BD)\\
&\Rightarrow X^t > AQ^{-1}A^* \hspace{.5in} (\mbox{By lemma \ref{lemma 4}})\\
&\Rightarrow X > (AQ^{-1}A^*)^\frac{1}{t} \geq \lambda_n^\frac{1}{t}(AQ^{-1}A^*)I.
\end{split}
\end{equation*}
Similarly we get $X > (BQ^{-1}B^*)^\frac{1}{p} \geq \lambda_n^\frac{1}{p}(BQ^{-1}B^*)I$.
Thus we have
\begin{equation*}
X \geq \max \{\lambda_n^\frac{1}{t}(AQ^{-1}A^*),~\lambda_n^\frac{1}{p}(BQ^{-1}B^*)\}I=cI~~\textmd{(say)}.
\end{equation*}
Also since $X^s < Q \Rightarrow X < Q^\frac{1}{s}$, therefore we get $X \in [cI,Q^\frac{1}{s}]$.

\begin{theorem} \label{thm 1}
	If equation (\ref{eq:1}) has a Hermitian positive definite solution $X$, then $X \in [mI,N]$, where $m= \max\{\lambda_n^\frac{1}{t}(A'),~\lambda_n^\frac{1}{p}(B')\}$,\\ $N=[Q-\Big(\frac{\lambda_n(Q^{-1})}{\lambda_1(Q^{-1})}\Big)^\frac{t-1}{s}A^*Q^{-\frac{t}{s}}A-\Big(\frac{\lambda_n(Q^{-1})}{\lambda_1(Q^{-1})}\Big)^\frac{p-1}{s}B^*Q^{-\frac{p}{s}}B]^\frac{1}{s}$,\\ $A'=A(Q-c^sI)^{-1}A^*$, $B'=B(Q-c^sI)^{-1}B^*$\\ and $c=\max \{\lambda_n^\frac{1}{t}(AQ^{-1}A^*),~\lambda_n^\frac{1}{p}(BQ^{-1}B^*)\}$.
\end{theorem}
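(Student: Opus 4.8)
The plan is to establish the two bounds $X\geq mI$ and $X\leq N$ separately, in each case feeding structural information about $X$ back into \eqref{eq:1}. For the lower bound I would start from the crude estimate $X\geq cI$ already derived in the discussion preceding the theorem, which by Lemma \ref{lemma 4} gives $X^s\geq c^sI$. Rearranging \eqref{eq:1} and discarding the positive term $B^*X^{-p}B$ yields $A^*X^{-t}A< Q-X^s\leq Q-c^sI$, where $Q-c^sI>0$ because $c^sI\leq X^s<Q$. Since $A$ is nonsingular I can write $X^{-t}<A^{-*}(Q-c^sI)A^{-1}$, and inverting (order-reversing, Lemma \ref{lemma 4}) gives $X^t>A(Q-c^sI)^{-1}A^*=A'$. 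Applying Lemma \ref{lemma 4} with exponent $1/t\in(0,1]$ then gives $X>(A')^{1/t}\geq \lambda_n^{1/t}(A')I$, and symmetrically $X>(B')^{1/p}\geq \lambda_n^{1/p}(B')I$; taking the larger of the two scalar bounds produces $X\geq mI$.

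For the upper bound I need a \emph{lower} estimate for the terms $A^*X^{-t}A$ and $B^*X^{-p}B$ in terms of $Q$. Starting from $X^s<Q$, Lemma \ref{lemma 4} gives $X<Q^{1/s}$ and hence $X^{-1}>Q^{-1/s}$. The key step is to invoke Lemma \ref{lemma 2} with the operators $Q^{-1/s}\leq X^{-1}$ and exponent $r=t$: the spectral bounds $M_1I\geq Q^{-1/s}\geq m_1I$ with $M_1=\lambda_n(Q)^{-1/s}$ and $m_1=\lambda_1(Q)^{-1/s}$ give $M_1/m_1=(\lambda_1(Q^{-1})/\lambda_n(Q^{-1}))^{1/s}$, so the conclusion $Q^{-t/s}\leq (M_1/m_1)^{t-1}X^{-t}$ rearranges to $X^{-t}\geq \big(\frac{\lambda_n(Q^{-1})}{\lambda_1(Q^{-1})}\big)^{\frac{t-1}{s}}Q^{-t/s}$. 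Conjugating by $A$, and arguing symmetrically for $B$ with $r=p$, gives the desired lower bounds for $A^*X^{-t}A$ and $B^*X^{-p}B$; substituting these into $X^s=Q-A^*X^{-t}A-B^*X^{-p}B$ yields exactly $X^s\leq N^s$, and a final application of Lemma \ref{lemma 4} with exponent $1/s$ delivers $X\leq N$.

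The main obstacle is the upper bound, specifically the correct deployment of Lemma \ref{lemma 2}: one must choose the pair $Q^{-1/s}\leq X^{-1}$ and the exponent $r=t$ (resp. $r=p$) so that the eigenvalue-ratio constant produced by the lemma is precisely $\big(\frac{\lambda_n(Q^{-1})}{\lambda_1(Q^{-1})}\big)^{\frac{t-1}{s}}$, and one must check that the two-sided spectral bounds on \emph{both} operators required by the lemma genuinely hold, which they do because $X\in[cI,Q^{1/s}]$ forces $Q^{-1/s}\leq X^{-1}\leq c^{-1}I$. The lower bound, by contrast, is a routine bootstrapping of the already-available estimate $X\geq cI$, and the only point needing care there is confirming that $Q-c^sI$ is positive definite so that the conjugation and inversion steps are legitimate.
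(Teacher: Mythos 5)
Your proposal is correct and follows essentially the same route as the paper's proof: the upper bound via Lemma \ref{lemma 2} applied to the pair $Q^{-1/s}\leq X^{-1}$ with exponents $t$ and $p$, and the lower bound by bootstrapping the estimate $X\geq cI$ through the rearranged equation to reach $X^t> A(Q-c^sI)^{-1}A^*$ and its $B$-analogue. The only cosmetic difference is that you replace $Q-X^s$ by $Q-c^sI$ before conjugating and inverting, whereas the paper does so afterwards; this changes nothing.
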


\begin{proof}
Let $X$ be a Hermitian positive definite solution of equation (\ref{eq:1}), then $cI \leq X \leq Q^\frac{1}{s}$ implies
\begin{equation}\label{eq:5}
Q^{-\frac{1}{s}} \leq X^{-1} \leq c^{-1}I.
\end{equation}

\begin{equation}\label{eq:6}
\lambda_n^{\frac{1}{s}}(Q^{-1})I \leq Q^{-\frac{1}{s}} \leq \lambda_1^{\frac{1}{s}}(Q^{-1})I
\end{equation}
Thus using Lemma \ref{lemma 2}, equation (\ref{eq:5}) and (\ref{eq:6}) we get
\begin{equation}\label{eq:7}
Q^{-\frac{t}{s}} \leq \Big(\frac{\lambda_1(Q^{-1})}{\lambda_n(Q^{-1})}\Big)^\frac{t-1}{s} X^{-t}.
\end{equation}
Similarly we also get
\begin{equation}\label{eq:8}
Q^{-\frac{p}{s}} \leq \Big(\frac{\lambda_1(Q^{-1})}{\lambda_n(Q^{-1})}\Big)^\frac{p-1}{s} X^{-p}.
\end{equation}
Therefore from (\ref{eq:1}) we have 
\begin{equation*}
\begin{split}
Q-X^s&= A^*X^{-t}A+B^*X^{-p}B\\
&\geq \Big(\frac{\lambda_n(Q^{-1})}{\lambda_1(Q^{-1})}\Big)^\frac{t-1}{s} A^*Q^{-\frac{t}{s}}A+\Big(\frac{\lambda_n(Q^{-1})}{\lambda_1(Q^{-1})}\Big)^\frac{p-1}{s} B^*Q^{-\frac{p}{s}}B \notag  ~~(\mbox{from~ (\ref{eq:7}), (\ref{eq:8})}) \\
&\hspace*{-1.6cm}\Rightarrow X \leq [Q-\Big(\frac{\lambda_n(Q^{-1})}{\lambda_1(Q^{-1})}\Big)^\frac{t-1}{s} A^*Q^{-\frac{t}{s}}A-\Big(\frac{\lambda_n(Q^{-1})}{\lambda_1(Q^{-1})}\Big)^\frac{p-1}{s} B^*Q^{-\frac{p}{s}}B]^\frac{1}{s}=N \textmd{(say)}.
\end{split}
\end{equation*}
Now,
\begin{equation*}
\begin{split}
Q-A^*X^{-t}A \geq X^s&\Rightarrow A^*X^{-t}A \leq Q-X^s\\
&\Rightarrow A^{-1}X^tA^{-*} \geq (Q-X^s)^{-1}\\
&\Rightarrow X \geq (A(Q-X^s)^{-1}A^*)^\frac{1}{t}.
\end{split}
\end{equation*}
Similarly we have $X \geq (B(Q-X^s)^{-1}B^*)^\frac{1}{p}$.\\
Again,
\begin{equation*}
\begin{split}
X \geq cI\Rightarrow X^s \geq c^sI&\Rightarrow Q-X^s \leq Q-c^sI\\
&\Rightarrow (A(Q-X^s)^{-1}A^*)^\frac{1}{t} \geq (A(Q-c^sI)^{-1}A^*)^\frac{1}{t}.
\end{split}
\end{equation*}
Therefore we have $X \geq (A(Q-c^sI)^{-1}A^*)^\frac{1}{t} \geq \lambda_n^\frac{1}{t}(A')I$,\\ where $A'=A(Q-c^sI)^{-1}A^*$.\\
Similarly we also have $X \geq (B(Q-c^sI)^{-1}B^*)^\frac{1}{p} \geq \lambda_n^\frac{1}{p}(B')I$,\\  where $B'=B(Q-c^sI)^{-1}B^*$.\\
Thus, $X \geq \max\{\lambda_n^\frac{1}{t}(A'),~\lambda_n^\frac{1}{p}(B')\}I=mI$. So $X \in [mI,~N]$.
\end{proof}

\begin{remark}
	$[mI,N] \subseteq [cI,Q^\frac{1}{s}]$.
\end{remark}

\begin{proof}
\begin{equation*}
\begin{split}
&Q-c^sI \leq Q\\
&\Rightarrow (A(Q-c^sI)^{-1}A^*)^\frac{1}{t} \geq (AQ^{-1}A^*)^\frac{1}{t} \geq \lambda_n^\frac{1}{t}(AQ^{-1}A^*)I\\
&\Rightarrow \max\{\lambda_n^\frac{1}{t}(A'), \lambda_n^\frac{1}{p}(B')\}I \geq \lambda_n^\frac{1}{t}(A')I \geq \lambda_n^\frac{1}{t}(AQ^{-1}A^*)I.
\end{split}
\end{equation*}
Also, $\max\{\lambda_n^\frac{1}{t}(A'), \lambda_n^\frac{1}{p}(B')\}I \geq \lambda_n^\frac{1}{p}(B')I \geq \lambda_n^\frac{1}{p}(BQ^{-1}B^*)I$.\\
Therefore, $mI=\max\{\lambda_n^\frac{1}{t}(A'),~\lambda_n^\frac{1}{p}(B')\}I \geq \max\{\lambda_n^\frac{1}{t}(AQ^{-1}A^*),~\lambda_n^\frac{1}{p}(BQ^{-1}B^*)\}I \\ =cI $.\\
Also since, $N=[Q-(\frac{\lambda_n(Q^{-1})}{\lambda_1(Q^{-1})})^\frac{t-1}{s} A^*Q^{-\frac{t}{s}}A-(\frac{\lambda_n(Q^{-1})}{\lambda_1(Q^{-1})})^\frac{p-1}{s} B^*Q^{-\frac{p}{s}}B]^\frac{1}{s} \leq Q^\frac{1}{s}$, 
 we have $[mI,N] \subseteq [cI,Q^\frac{1}{s}]$.
\end{proof}
  
  In next couple of theorems we give some sufficient criteria for the uniqueness of solutions of (\ref{eq:1}).
\begin{theorem}\label{thm 2}
	If $(AQ^{-1}A^*)^\frac{s}{t}+(BQ^{-1}B^*)^{\frac{s}{p}} \leq Q$, $A^*X^{-t}A+B^*X^{-p}B \leq Q-(AQ^{-1}A^*)^\frac{s}{t}-(BQ^{-1}B^*)^\frac{s}{p}$, for all $X \in [cI,Q^\frac{1}{s}]$ \\and 
	$\frac{1}{s}a^{\frac{1}{s}-1}[\frac{t}{c^{t+1}}\|A\|^2+\frac{p}{c^{p+1}}\|B\|^2] < 1$, where $a=\lambda_n^{\frac{s}{t}}(AQ^{-1}A^*)+\lambda_n^{\frac{s}{p}}(BQ^{-1}B^*)$ and $c=\max \{\lambda_n^\frac{1}{t}(AQ^{-1}A^*),~\lambda_n^\frac{1}{p}(BQ^{-1}B^*)\}$.  Then equation (\ref{eq:1}) has a unique Hermitian positive definite solution in $[cI,Q^\frac{1}{s}]$, and hence in $\mathcal{P}(n)$.
\end{theorem}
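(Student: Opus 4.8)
The plan is to realise the solutions of (\ref{eq:1}) as the fixed points of the map
\begin{equation*}
G(X)=\bigl(Q-A^*X^{-t}A-B^*X^{-p}B\bigr)^{1/s},
\end{equation*}
and to run the contraction principle on $\mathbb{X}=[cI,Q^{1/s}]$ endowed with the L\"owner order and the metric $d(X,Y)=\|X-Y\|$. A matrix $X\in\mathcal{P}(n)$ solves (\ref{eq:1}) exactly when $G(X)=X$, so it suffices to show $G$ is a contraction of the complete metric space $\mathbb{X}$ into itself. Note that $\mathbb{X}$ is a closed and bounded (hence complete) subset of the Hermitian matrices, $G$ is continuous there, and, crucially, the contraction estimate below will hold for \emph{every} pair in $\mathbb{X}$, so I will not need any monotonicity of $G$; indeed $X\mapsto X^{-t}$ is not operator monotone for $t>1$, so I avoid invoking Theorem \ref{lemma 7} through its order hypotheses and use the plain Banach contraction principle, which is the unordered core of that result.

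First I would check that $G$ maps $\mathbb{X}$ into itself. For the upper bound, since $A^*X^{-t}A\ge 0$ and $B^*X^{-p}B\ge 0$ the argument of $G$ is $\le Q$, so by operator monotonicity of $z\mapsto z^{1/s}$ (Lemma \ref{lemma 4}, as $1/s\in(0,1]$) we get $G(X)\le Q^{1/s}$. For the lower bound I would use the second hypothesis: for $X\in\mathbb{X}$,
\begin{equation*}
Q-A^*X^{-t}A-B^*X^{-p}B\ge (AQ^{-1}A^*)^{s/t}+(BQ^{-1}B^*)^{s/p}\ge aI>0,
\end{equation*}
whence $G(X)\ge a^{1/s}I$. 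Since $c^s=\max\{\lambda_n^{s/t}(AQ^{-1}A^*),\lambda_n^{s/p}(BQ^{-1}B^*)\}\le a$, we have $a^{1/s}\ge c$ and therefore $G(X)\ge cI$; thus $G(\mathbb{X})\subseteq\mathbb{X}$.

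Next comes the contraction estimate. Writing $P=Q-A^*X^{-t}A-B^*X^{-p}B$ and $R=Q-A^*Y^{-t}A-B^*Y^{-p}B$, both $\ge aI$ by the previous step, Lemma \ref{lemma 3} with $\theta=1/s$ and $b=a$ gives $\|G(X)-G(Y)\|=\|P^{1/s}-R^{1/s}\|\le \tfrac{1}{s}a^{1/s-1}\|P-R\|$. Then Lemma \ref{lemma 6} (spectral norm) yields
\begin{equation*}
\|P-R\|\le \|A\|^2\,\|X^{-t}-Y^{-t}\|+\|B\|^2\,\|X^{-p}-Y^{-p}\|,
\end{equation*}
so everything reduces to a Lipschitz bound for the negative powers. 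Using $\|X^{-t}-Y^{-t}\|\le \tfrac{t}{c^{t+1}}\|X-Y\|$ and the analogue for $p$, the contraction constant becomes exactly $\tfrac{1}{s}a^{1/s-1}\bigl[\tfrac{t}{c^{t+1}}\|A\|^2+\tfrac{p}{c^{p+1}}\|B\|^2\bigr]$, which is $<1$ by the third hypothesis.

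The main obstacle is precisely this last bound, because Lemma \ref{lemma 3} supplies $\|X^{-\theta}-Y^{-\theta}\|\le\theta c^{-(\theta+1)}\|X-Y\|$ only for $\theta\in(0,1]$, whereas here $t,p\ge 1$. I would bridge the gap through the decomposition $X^{-t}=(X^{-1})^t$ with $X^{-1},Y^{-1}\le c^{-1}I$: the positive-power estimate $\|U^t-V^t\|\le t(c^{-1})^{t-1}\|U-V\|$ for $U,V\le c^{-1}I$ (from the telescoping $U^t-V^t=\sum_{k=0}^{t-1}U^{k}(U-V)V^{t-1-k}$ when $t$ is an integer, and its operator-Lipschitz extension for general $t\ge 1$) combined with the resolvent identity $\|X^{-1}-Y^{-1}\|=\|X^{-1}(Y-X)Y^{-1}\|\le c^{-2}\|X-Y\|$ produces $\|X^{-t}-Y^{-t}\|\le t(c^{-1})^{t-1}c^{-2}\|X-Y\|=\tfrac{t}{c^{t+1}}\|X-Y\|$, as required. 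With the contraction in hand, Banach's theorem furnishes a unique fixed point of $G$ in $\mathbb{X}=[cI,Q^{1/s}]$, i.e.\ a unique solution of (\ref{eq:1}) there. Finally, uniqueness in all of $\mathcal{P}(n)$ follows because every Hermitian positive definite solution was shown, in the discussion preceding Theorem \ref{thm 1}, to lie in $[cI,Q^{1/s}]$.
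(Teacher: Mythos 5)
Your proposal is correct and follows essentially the same route as the paper: the same map $X\mapsto\bigl(Q-A^*X^{-t}A-B^*X^{-p}B\bigr)^{1/s}$ on $[cI,Q^{1/s}]$, self-mapping via the two order hypotheses, Lemma \ref{lemma 3} with $b=a$ for the outer $1/s$-power, the telescoping factorization of $X^{-t}-Y^{-t}$ to obtain the factor $t/c^{t+1}$, and Banach's contraction principle. The only (harmless) deviations are cosmetic: you reach the lower bound $cI$ through $a^{1/s}\ge c$ rather than the paper's direct estimate of $[(AQ^{-1}A^*)^{s/t}+(BQ^{-1}B^*)^{s/p}]^{1/s}$, and you explicitly flag the integer-exponent caveat in the telescoping step, which the paper leaves implicit.
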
 

\begin{proof}
	First of all note that $[(AQ^{-1}A^*)^\frac{s}{t}+(BQ^{-1}B^*)^\frac{s}{p}]^\frac{1}{s} \geq ((AQ^{-1}A^*)^\frac{s}{t})^\frac{1}{s}=(AQ^{-1}A^*)^\frac{1}{t} \geq \lambda_n^\frac{1}{t}(AQ^{-1}A^*)I$,\\ and $[(AQ^{-1}A^*)^\frac{s}{t}+(BQ^{-1}B^*)^\frac{s}{p}]^\frac{1}{s} \geq (BQ^{-1}B^*)^\frac{1}{p} \geq \lambda_n^\frac{1}{p}(BQ^{-1}B^*)I$.\\
	 Thus we have 
	  \begin{equation}\label{eq:9}
	 [(AQ^{-1}A^*)^\frac{s}{t}+(BQ^{-1}B^*)^\frac{s}{p}]^\frac{1}{s} \geq \max \{\lambda_n^\frac{1}{t}(AQ^{-1}A^*),~\lambda_n^\frac{1}{p}(BQ^{-1}B^*)\}I=cI.
	 \end{equation}
	 Now let $X \in [cI,Q^\frac{1}{s}]$ and define a function $f:[cI,Q^\frac{1}{s}] \to \mathcal{M}(n)$ (the set of all $n\times n$ matrices) by  $f(X)=(Q-A^*X^{-t}A-B^*X^{-p}B)^\frac{1}{s}$. Therefore 
	 \begin{equation*}
	 \begin{split}
	 Q^\frac{1}{s} &\geq (Q-A^*X^{-t}A-B^*X^{-p}B)^\frac{1}{s}=f(X)\\
	 & \geq  (Q-Q+(AQ^{-1}A^*)^\frac{s}{t}+(BQ^{-1}B^*)^\frac{s}{p})^\frac{1}{s}  \hspace{.3in} \textmd{(by our assumption)}\\
	 &= [(AQ^{-1}A^*)^\frac{s}{t}+(BQ^{-1}B^*)^\frac{s}{p}]^\frac{1}{s}\\
	 & \geq  cI \notag \hspace{2.6in} (\mbox{from~(\ref{eq:9})}).
	 \end{split}
	 \end{equation*}
Thus $f$ maps $[cI,Q^\frac{1}{s}]$ into itself. Now let $X,Y \in [cI,Q^\frac{1}{s}]$, then
\begin{equation*}
\begin{split}
Q-(A^*X^{-t}A+B^*X^{-p}B) & \geq (AQ^{-1}A^*)^\frac{s}{t}+(BQ^{-1}B^*)^{\frac{s}{p}}\\
& \geq \Big(\lambda_n^{\frac{s}{t}}(AQ^{-1}A^*)+\lambda_n^{\frac{s}{p}}(BQ^{-1}B^*)\Big)I=aI.
\end{split}
\end{equation*}
Similarly, $Q-(A^*Y^{-t}A+B^*Y^{-p}B) \geq aI$. Thus for $X,Y \in [cI,Q^\frac{1}{s}]$, we have
\begin{equation*}
\begin{split}
\|f(X)-f(Y)\| &= \|(Q-A^*X^{-t}A-B^*X^{-p}B)^\frac{1}{s}-(Q-A^*Y^{-t}A-B^*Y^{-p}B)^\frac{1}{s}\| \\
& \leq \frac{1}{s}a^{\frac{1}{s}-1}\|A^*(Y^{-t}-X^{-t})A+B^*(Y^{-p}-X^{-p})B\| \hspace{.1in} (\textmd{By ~Lemma~\ref{lemma 3}})\\ 
& \leq  \frac{1}{s}a^{\frac{1}{s}-1} \Big[\|A\|^2 \|Y^{-t}-X^{-t}\|+\|B\|^2 \|Y^{-p}-X^{-p}\|\Big] \\
&\leq \frac{1}{s}a^{\frac{1}{s}-1} \Big[\|A\|^2\|\sum_{i=1}^t Y^{-(t+1)+i}(X-Y)X^{-i}\|\\
&\hspace*{0.5cm}+\|B\|^2\|\sum_{j=1}^p Y^{-(p+1)+j}(X-Y)X^{-j}\|\Big]\\
&\leq \frac{1}{s}a^{\frac{1}{s}-1} \Big[\|A\|^2\sum_{i=1}^t \big(\|Y^{-(t+1)+i}\|~\|X^{-i}\|~\|(X-Y)\|\big)\\
&\hspace*{0.5cm}+\|B\|^2\sum_{j=1}^p \big(\|Y^{-(p+1)+j}\|~\|X^{-j}\|~\|(X-Y)\|\big)\Big]\\
&\leq \frac{1}{s}a^{\frac{1}{s}-1} \Big[\|A\|^2\sum_{i=1}^t (c^{-(t+1)+i}~c^{-i})\\
&\hspace*{0.5cm}+\|B\|^2 \sum_{j=1}^p (c^{-(p+1)+j}~c^{-j})\Big]\|X-Y\|\\
&\leq\frac{1}{s} a^{\frac{1}{s}-1} [~\|A\|^2 ~\frac{t}{c^{t+1}}+\|B\|^2~\frac{p}{c^{p+1}}~]~\|X-Y\|.
\end{split}
\end{equation*}
Therefore by Banach's contraction principle $f$ has a unique fixed point in $[cI,Q^\frac{1}{s}]$, which is a Hermitian positive definite solution of equation (\ref{eq:1}). Also since any solutions of (\ref{eq:1}) must be in $[cI,Q^\frac{1}{s}]$, the solution is unique in $\mathcal{P}(n)$.
\end{proof}

\begin{theorem}\label{thm 3}
	If there exists $k > 0$ such that $\frac{1}{k^t}+\frac{1}{k^p}<1$ and 
	$\lambda_1 (c_1^sQ^{-1}) \leq (1-k^{-t}-k^{-p})k^{-s}$, where $c_1=\max \{\lambda_1^\frac{1}{t}(AQ^{-1}A^*),~\lambda_1^\frac{1}{p}(BQ^{-1}B^*)\}$, then $kc_1I \leq Q^{\frac{1}{s}}$ and equation (\ref{eq:1}) has a Hermitian positive definite solution in $[kc_1I,Q^\frac{1}{s}]$. Furthermore, the solution is unique if  $\frac{1}{s}(kc_1)^{1-s}[\frac{t}{(kc_1)^{t+1}} \|A\|^2+\frac{p}{(kc_1)^{p+1}} \|B\|^2] < 1$.
\end{theorem}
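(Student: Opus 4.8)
The plan is to reduce everything, exactly as in the proof of Theorem \ref{thm 2}, to the auxiliary map $f(X)=(Q-A^*X^{-t}A-B^*X^{-p}B)^{\frac{1}{s}}$ on the order interval $[kc_1I,Q^{\frac{1}{s}}]$, and to verify in turn that the interval is nonempty, that $f$ maps it into itself (giving existence via Brouwer), and that under the extra hypothesis $f$ is a contraction (giving uniqueness via Banach). First I would dispose of the claim $kc_1I\le Q^{\frac{1}{s}}$. Rewriting $\lambda_1(c_1^sQ^{-1})\le(1-k^{-t}-k^{-p})k^{-s}$ as $c_1^s/\lambda_n(Q)\le(1-k^{-t}-k^{-p})k^{-s}$ and using $0<1-k^{-t}-k^{-p}<1$ (this is where the hypothesis $k^{-t}+k^{-p}<1$ enters), I obtain $(kc_1)^s\le(1-k^{-t}-k^{-p})\lambda_n(Q)<\lambda_n(Q)$, hence $kc_1\le\lambda_n(Q)^{\frac{1}{s}}=\lambda_n(Q^{\frac{1}{s}})$, i.e. $kc_1I\le Q^{\frac{1}{s}}$. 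Written in operator form, the same rearrangement records the inequality I will need below: $(kc_1)^sI\le(1-k^{-t}-k^{-p})Q$, equivalently $Q-(kc_1)^sI\ge(k^{-t}+k^{-p})Q$.

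For existence, the upper bound $f(X)\le Q^{\frac{1}{s}}$ is immediate from $A^*X^{-t}A+B^*X^{-p}B\ge0$ together with Lemma \ref{lemma 4} applied with exponent $\frac{1}{s}\in(0,1]$. The crux is the lower bound $f(X)\ge kc_1I$. The key observation is that the definition of $c_1$ gives $AQ^{-1}A^*\le c_1^tI$, and since $AQ^{-1}A^*=(AQ^{-\frac{1}{2}})(AQ^{-\frac{1}{2}})^*$ and $Q^{-\frac{1}{2}}A^*AQ^{-\frac{1}{2}}=(AQ^{-\frac{1}{2}})^*(AQ^{-\frac{1}{2}})$ share the same largest eigenvalue, this upgrades to the sharp operator inequality $A^*A\le c_1^tQ$; likewise $B^*B\le c_1^pQ$. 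Then for $X\ge kc_1I$ I have $X^{-t}\le(kc_1)^{-t}I$ by Lemma \ref{lemma 4}, whence
\[
A^*X^{-t}A\le(kc_1)^{-t}A^*A\le(kc_1)^{-t}c_1^tQ=k^{-t}Q,
\]
and similarly $B^*X^{-p}B\le k^{-p}Q$. Adding these and invoking the operator inequality from the first paragraph gives $A^*X^{-t}A+B^*X^{-p}B\le(k^{-t}+k^{-p})Q\le Q-(kc_1)^sI$, so that $Q-A^*X^{-t}A-B^*X^{-p}B\ge(kc_1)^sI>0$ and therefore $f(X)\ge kc_1I$. Thus $f$ is a continuous self-map of the compact convex set $[kc_1I,Q^{\frac{1}{s}}]$, and Brouwer's fixed point theorem yields a fixed point, which is precisely a Hermitian positive definite solution of \eqref{eq:1} in $[kc_1I,Q^{\frac{1}{s}}]$.

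Finally, uniqueness under $\frac{1}{s}(kc_1)^{1-s}\big[\frac{t}{(kc_1)^{t+1}}\|A\|^2+\frac{p}{(kc_1)^{p+1}}\|B\|^2\big]<1$ is a verbatim repetition of the contraction estimate in Theorem \ref{thm 2}, with $c$ replaced by $kc_1$ and $a$ replaced by $(kc_1)^s$. Indeed, the bound just established shows $Q-A^*X^{-t}A-B^*X^{-p}B\ge(kc_1)^sI$ for every $X$ in the interval, which plays the role of $aI$; Lemma \ref{lemma 3} with $\theta=\frac{1}{s}$ and $b=(kc_1)^s$ supplies the factor $b^{\frac{1}{s}-1}=(kc_1)^{1-s}$, and then the telescoping identity for $Y^{-t}-X^{-t}$, Lemma \ref{lemma 6}, and the norm bounds $\|X^{-i}\|,\|Y^{-i}\|\le(kc_1)^{-i}$ valid on the interval produce exactly the stated contraction constant. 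Banach's contraction principle on the complete metric space $[kc_1I,Q^{\frac{1}{s}}]$ then gives uniqueness there.

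I expect the main obstacle to be the lower-bound step, and specifically the recognition that the definition of $c_1$ upgrades to the sharp operator inequality $A^*A\le c_1^tQ$ rather than the lossy scalar estimate $\|A\|^2\le c_1^t\lambda_1(Q)$: only the sharp form converts the hypothesis $\lambda_1(c_1^sQ^{-1})\le(1-k^{-t}-k^{-p})k^{-s}$ into the self-map inequality $f(X)\ge kc_1I$. Everything else parallels Theorem \ref{thm 2} and is routine.
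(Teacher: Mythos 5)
Your proposal is correct and follows essentially the same route as the paper: establish $(kc_1)^sI\le(1-k^{-t}-k^{-p})Q$ from the eigenvalue hypothesis, show the map $X\mapsto(Q-A^*X^{-t}A-B^*X^{-p}B)^{\frac{1}{s}}$ sends $[kc_1I,Q^{\frac{1}{s}}]$ into itself, invoke Brouwer for existence, and reuse the contraction estimate of Theorem \ref{thm 2} with $a=(kc_1)^s$ for uniqueness. The only (cosmetic) divergence is in deriving $A^*X^{-t}A\le k^{-t}Q$: the paper inverts $X^t\ge k^tAQ^{-1}A^*$ to get $X^{-t}\le k^{-t}A^{-*}QA^{-1}$ and conjugates by $A$, whereas you first upgrade $\lambda_1(AQ^{-1}A^*)\le c_1^t$ to the operator inequality $A^*A\le c_1^tQ$; both are valid and yield the same bound.
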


\begin{proof}
 Since $c_1^sQ^{-1} \leq \lambda_1 (c_1^sQ^{-1})I \leq (1-k^{-t}-k^{-p})k^{-s}I$, we have 
$(kc_1)^sI \leq (1-k^{-t}-k^{-p})Q \leq Q$, which implies
\begin{equation}\label{eq:11}
kc_1I \leq Q^\frac{1}{s}.
\end{equation}
This proves our first claim.\\
Now let $X \in [kc_1I,Q^\frac{1}{s}]$. Then
\begin{equation*}
\begin{split}
X^t &\geq k^tc_1^tI\\
&= k^t \max \{\lambda_1(AQ^{-1}A^*), \lambda_1^\frac{t}{p}(BQ^{-1}B^*)\}I\\
&\geq k^t \lambda_1(AQ^{-1}A^*)I\geq k^t AQ^{-1}A^*\\
&\hspace*{-1cm} \Rightarrow X^{-t} \leq k^{-t} A^{-*}QA^{-1}.
\end{split}
\end{equation*}
Similarly, $X^{-p} \leq k^{-p} B^{-*}QB^{-1}$.\\
Define a function $G:[kc_1I,Q^\frac{1}{s}]\to \mathcal{M}(n)$ by  $G(X)=\big(Q-A^*X^{-t}A-B^*X^{-p}B\big)^\frac{1}{s}$.
Therefore
\begin{equation*}
\begin{split}
G(X) &\geq \big(Q-A^* (k^{-t} A^{-*}QA^{-1} )A-B^*(k^{-p}B^{-*}QB^{-1})B\big)^\frac{1}{s} \\
&=\big((1-k^{-t}-k^{-p})Q\big)^\frac{1}{s} \\
&\geq kc_1I \hspace{.5in} (\mbox{from~ (\ref{eq:11})}).
\end{split}
\end{equation*}
Also $G(X)=(Q-A^*X^{-t}A-B^*X^{-p}B)^\frac{1}{s} \leq Q^{\frac{1}{s}}$.\\
Therefore $G$ maps $[kc_1I,Q^\frac{1}{s}]$ into itself and $G$ is continuous. Also $[kc_1I,Q^\frac{1}{s}]$ is closed and convex. Thus using Brouwer's fixed point theorem we conclude that $G$ has a fixed point in $[kc_1I,Q^\frac{1}{s}]$, which is a Hermitian positive definite solution of (\ref{eq:1}). 

Now for all $X \in [kc_1I,Q^\frac{1}{s}]$, we have $(Q-A^*X^{-t}A-B^*X^{-p}B)^\frac{1}{s} \geq kc_1I \Rightarrow Q-A^*X^{-t}A-B^*X^{-p}B \geq (kc_1)^sI$.\\
Thus progressing as in latter half of Theorem \ref{thm 2}, we can conclude by Banach's contraction principle that the solution is unique if  $\frac{1}{s}(kc_1)^{1-s}[\frac{t}{(kc_1)^{t+1}} \|A\|^2+\frac{p}{(kc_1)^{p+1}} \|B\|^2] < 1$.
\end{proof}

Now with a different perspective, we give a necessary and sufficient condition for the existence of a Hermitian positive definite solution of (\ref{eq:1}).
\begin{theorem}\label{thm 4}
	Equation (\ref{eq:1}) has a Hermitian positive definite solution if and only if $A$ and $B$ can be factored as $A=(U \Lambda U^*)^{\frac{t}{2s}} N_1$ and $B=(U \Lambda U^*)^{\frac{p}{2s}}N_2$, where $U$ is a unitary and $\Lambda$ is a diagonal matrix and
	$ \left(
	\begin{array}{c}
	\Lambda^{\frac{1}{2}} U^* Q^{-\frac{1}{2}} \\
	N_1 Q^{-\frac{1}{2}} \\
	N_2 Q^{-\frac{1}{2}}
	\end{array} \right)$  is column orthonormal.
\end{theorem}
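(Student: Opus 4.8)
The plan is to strip the opaque ``column orthonormal'' hypothesis down to a single transparent matrix identity, after which both implications reduce to direct substitution. Let $M$ denote the stacked $3n\times n$ matrix appearing in the statement. Since $Q^{-1/2}$ and $\Lambda^{1/2}$ are Hermitian, a one-line computation gives $M^*M=Q^{-1/2}\bigl(U\Lambda U^*+N_1^*N_1+N_2^*N_2\bigr)Q^{-1/2}$, so that $M$ is column orthonormal (that is, $M^*M=I$) if and only if
\begin{equation*}
U\Lambda U^*+N_1^*N_1+N_2^*N_2=Q. \tag{$\star$}
\end{equation*}
The right-hand side of $(\star)$ is exactly the right-hand side of \eqref{eq:1}, and recognizing this coincidence is the crux of the whole argument.

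For necessity, suppose $X\in\mathcal{P}(n)$ solves \eqref{eq:1}. Then $X^s$ is Hermitian positive definite and admits a spectral decomposition $X^s=U\Lambda U^*$ with $U$ unitary and $\Lambda$ diagonal with positive entries. Because every power of $U\Lambda U^*$ is a function of the single matrix $X^s$, we have $(U\Lambda U^*)^{t/(2s)}=(X^s)^{t/(2s)}=X^{t/2}$ and likewise $(U\Lambda U^*)^{p/(2s)}=X^{p/2}$. I would then set $N_1:=X^{-t/2}A$ and $N_2:=X^{-p/2}B$, whence the factorizations $A=(U\Lambda U^*)^{t/(2s)}N_1$ and $B=(U\Lambda U^*)^{p/(2s)}N_2$ are immediate. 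Finally $N_1^*N_1=A^*X^{-t}A$, $N_2^*N_2=B^*X^{-p}B$ and $U\Lambda U^*=X^s$, so $(\star)$ is just equation \eqref{eq:1}; by the equivalence above $M$ is column orthonormal.

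For sufficiency, assume the two factorizations and $(\star)$ hold, with $\Lambda$ having positive diagonal entries so that $U\Lambda U^*\in\mathcal{P}(n)$. Define $X:=(U\Lambda U^*)^{1/s}$, which is Hermitian positive definite with $X^s=U\Lambda U^*$ and $X^{-t}=(U\Lambda U^*)^{-t/s}$. Using $A^*=N_1^*(U\Lambda U^*)^{t/(2s)}$ and the fact that all powers of the single matrix $U\Lambda U^*$ commute with additive exponents, the exponents cancel:
\begin{equation*}
A^*X^{-t}A=N_1^*(U\Lambda U^*)^{t/(2s)}(U\Lambda U^*)^{-t/s}(U\Lambda U^*)^{t/(2s)}N_1=N_1^*N_1,
\end{equation*}
and similarly $B^*X^{-p}B=N_2^*N_2$. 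Adding $X^s=U\Lambda U^*$ and invoking $(\star)$ yields $X^s+A^*X^{-t}A+B^*X^{-p}B=Q$, so $X$ solves \eqref{eq:1}.

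The computations are routine; the only points that demand care are the bookkeeping for fractional powers — justifying that the matrices $(U\Lambda U^*)^{\alpha}$ for the various exponents $\alpha$ commute and combine additively, which is immediate from the functional calculus for the single positive definite matrix $U\Lambda U^*$ — and the well-definedness issue, namely ensuring $\Lambda$ is positive so that $\Lambda^{1/2}$, the fractional powers, and $X=(U\Lambda U^*)^{1/s}$ are all genuine Hermitian positive definite objects. In the necessity direction positivity of $\Lambda$ is automatic from $X\in\mathcal{P}(n)$, and in the sufficiency direction it is part of the standing interpretation of the factorization (the presence of $\Lambda^{1/2}$ in $M$ already presupposes $\Lambda\ge 0$, and $(\star)$ forces $U\Lambda U^*=Q-N_1^*N_1-N_2^*N_2$ to be the relevant Hermitian matrix whose positivity makes $X$ a valid solution).
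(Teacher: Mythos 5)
Your argument is correct and is essentially the standard factorization proof that the paper defers to (it cites Theorem 2.7 of Liu--Chen rather than writing it out): reduce column orthonormality of the stacked matrix to the identity $U\Lambda U^*+N_1^*N_1+N_2^*N_2=Q$, then pass between solutions and factorizations via the spectral decomposition of $X^s$. The only spot worth tightening is the positivity of $\Lambda$ in the sufficiency direction, which follows cleanly from the standing hypothesis $A\in GL(n)$: the factorization $A=(U\Lambda U^*)^{t/(2s)}N_1$ forces $(U\Lambda U^*)^{t/(2s)}$, and hence $\Lambda$, to be invertible, so $\Lambda\geq 0$ upgrades to $\Lambda>0$ and $X=(U\Lambda U^*)^{1/s}$ is genuinely positive definite.
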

\begin{proof}
As the proof is similar to the proof of Theorem 2.7 of Liu-Chen  \cite{LC2011}, we exclude the proof. 
\end{proof}
Next we derive some iterations (with examples) to compute the Hermitian positive definite solutions of  (\ref{eq:1}).\\
Let $s \geq t$ and without loss of generality let $t \geq p$. Then $\max \{s,t,p\}=s$. Therefore taking $X^s=Y$ in (\ref{eq:1}) we get
\begin{equation}\label{eq:15}
Y+A^*Y^{-\frac{t}{s}}A+B^*Y^{-\frac{p}{s}}B=Q
\end{equation}
where $\frac{t}{s},\frac{p}{s} \leq 1$. Thus finding a Hermitian positive definite  solution of equation (\ref{eq:15}) (say $\bar{Y}$) will lead us to a Hermitian positive definite solution of equation (\ref{eq:1}) (which will be $\bar{Y}^\frac{1}{s}$). Also note that if $Y$ is any Hermitian positive definite solution of equation (\ref{eq:15}) then $Y\in[c^sI,Q]$.
\begin{theorem}\label{thm 6}
	Assume that there exists $\alpha \in (0,\lambda_n(Q)]$, such that
	\begin{equation}\label{eq:16}
	\alpha+\alpha^{-\frac{t}{s}}\|A\|^2+\alpha^{-\frac{p}{s}}\|B\|^2 < \lambda_n(Q).
	\end{equation}
	Let $\beta=\lambda_n(Q-\alpha^{-\frac{t}{s}}A^*A-\alpha^{-\frac{p}{s}}B^*B)$. Then equation (\ref{eq:15}) has a unique Hermitian positive definite solution $\bar{Y} \in [\alpha I,Q]$ if 
	\begin{center}
		$t\beta^{-\frac{t}{s}}\|A\|^2+p\beta^{-\frac{p}{s}}\|B\|^2 < s\beta$.
	\end{center}
 In this case, the sequence $Y_n$ defined by 
 \begin{center}
 	$Y_0=\alpha I$,\\
 	$Y_{n+1}=Q-A^*Y_n^{-\frac{t}{s}}A-B^* Y_n^{-\frac{p}{s}}B $;\
 	\end{center}
 	converge to $\bar{Y}$  and the error estimation is given by
 	\begin{equation*}
 	\|Y_n-\bar{Y}\|\leq \frac{\delta^n}{1-\delta}\max \{\|Y_1-Y_0\|\},
 	\end{equation*}
 	where $\delta=\frac{t}{s} \|A\|^2 \beta^{-\frac{t}{s}-1}+\frac{p}{s}\|B\|^2\beta^{-\frac{p}{s}-1}$.
\end{theorem}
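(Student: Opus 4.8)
The plan is to recast \eqref{eq:15} as a fixed point problem for the map $f(Y)=Q-A^*Y^{-\frac{t}{s}}A-B^*Y^{-\frac{p}{s}}B$, since $\bar{Y}$ solves \eqref{eq:15} exactly when $f(\bar{Y})=\bar{Y}$, and then to apply the Banach contraction principle on a suitable complete subinterval. First I would pin down the invariant set. Because $\frac{t}{s},\frac{p}{s}\le 1$, Lemma \ref{lemma 4} gives $Y^{-\frac{t}{s}}\le \alpha^{-\frac{t}{s}}I$ and $Y^{-\frac{p}{s}}\le \alpha^{-\frac{p}{s}}I$ whenever $Y\ge \alpha I$, so $f(Y)\le Q$ and $f(Y)\ge Q-\alpha^{-\frac{t}{s}}A^*A-\alpha^{-\frac{p}{s}}B^*B\ge \beta I$. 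Combining $A^*A\le \|A\|^2 I$ and $B^*B\le \|B\|^2 I$ with hypothesis \eqref{eq:16} yields $\beta\ge \lambda_n(Q)-\alpha^{-\frac{t}{s}}\|A\|^2-\alpha^{-\frac{p}{s}}\|B\|^2>\alpha$; together with $\beta\le \lambda_n(Q)$ this shows that $[\beta I,Q]$ is a nonempty closed subinterval of $[\alpha I,Q]$ and that $f$ maps $[\alpha I,Q]$ into $[\beta I,Q]$.

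Next I would establish that $f$ is a contraction on $[\beta I,Q]$. For $Y,Z\in[\beta I,Q]$ I would write $f(Y)-f(Z)=A^*(Z^{-\frac{t}{s}}-Y^{-\frac{t}{s}})A+B^*(Z^{-\frac{p}{s}}-Y^{-\frac{p}{s}})B$, apply Lemma \ref{lemma 6} to obtain $\|f(Y)-f(Z)\|\le \|A\|^2\|Z^{-\frac{t}{s}}-Y^{-\frac{t}{s}}\|+\|B\|^2\|Z^{-\frac{p}{s}}-Y^{-\frac{p}{s}}\|$, and then invoke Lemma \ref{lemma 3} with exponents $\theta=\frac{t}{s},\frac{p}{s}\in(0,1]$ and common lower bound $b=\beta$ (valid since $Y,Z\ge \beta I$). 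This gives $\|f(Y)-f(Z)\|\le \delta\|Y-Z\|$ with $\delta=\frac{t}{s}\|A\|^2\beta^{-\frac{t}{s}-1}+\frac{p}{s}\|B\|^2\beta^{-\frac{p}{s}-1}$. The point to notice is that dividing the hypothesis $t\beta^{-\frac{t}{s}}\|A\|^2+p\beta^{-\frac{p}{s}}\|B\|^2<s\beta$ by $s\beta$ is exactly the statement $\delta<1$, so the contraction assumption of the theorem is precisely what makes $f$ a contraction here.

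Since $[\beta I,Q]$ is a closed, hence complete, subset of the finite-dimensional space of Hermitian matrices under $\|\cdot\|$, the Banach contraction principle then yields a unique fixed point $\bar{Y}\in[\beta I,Q]$, which is the desired Hermitian positive definite solution. Uniqueness within the larger interval $[\alpha I,Q]$ is automatic: any solution $Y\in[\alpha I,Q]$ equals $f(Y)$ and hence lies in $f([\alpha I,Q])\subseteq[\beta I,Q]$, where the fixed point is unique. For the iteration, $Y_0=\alpha I$ gives $Y_1=f(Y_0)\in[\beta I,Q]$, so every subsequent iterate stays in the contraction region and $Y_n\to\bar{Y}$; the error estimate is the standard a priori Banach bound obtained by telescoping $\|Y_{n+1}-Y_n\|\le \delta\|Y_n-Y_{n-1}\|$ and summing the geometric tail.

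The step that needs the most care is the bookkeeping around the two thresholds $\alpha$ and $\beta$. The contraction constant $\delta$ relies on the lower bound $\beta$, which is only guaranteed after one application of $f$; since $Y_0=\alpha I$ lies strictly below $\beta I$ (as $\beta>\alpha$), the contraction inequality is not directly available for the initial pair $(Y_0,Y_1)$, and one must run the telescoping argument from $Y_1$ onward, using $Y_0\le Y_1$ (monotonicity of $f$, which follows from Lemma \ref{lemma 4}) to control the first increment and recover the stated bound in terms of $\|Y_1-Y_0\|$. Verifying $\beta>\alpha$ from \eqref{eq:16} and confirming that $[\beta I,Q]$ is simultaneously $f$-invariant and the exact locus of all solutions are the other places where the hypotheses must be used precisely rather than generously.
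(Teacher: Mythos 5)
Your proposal is correct, and it follows the paper's skeleton closely: the same map $f(Y)=Q-A^*Y^{-\frac{t}{s}}A-B^*Y^{-\frac{p}{s}}B$, the same observation that \eqref{eq:16} forces $\beta>\alpha$ and that $f$ sends $[\alpha I,Q]$ into the invariant subinterval $[\beta I,Q]$, and the same Lipschitz estimate via Lemma \ref{lemma 3} and Lemma \ref{lemma 6} yielding the constant $\delta$. The one genuine difference is how the fixed point is extracted. The paper only verifies the contraction inequality for \emph{comparable} pairs $Y\leq Z$ in $[\beta I,Q]$, checks that $f$ is order-preserving and that $\alpha I\leq f(\alpha I)$, and then invokes the Ran--Reurings ordered fixed point theorem (Theorem \ref{lemma 7}), which is tailored to exactly that weaker hypothesis. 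You instead note, correctly, that Lemma \ref{lemma 3} needs only $P,Q\geq bI$ and no ordering between $P$ and $Q$, so the same computation gives $\|f(Y)-f(Z)\|\leq\delta\|Y-Z\|$ for \emph{all} pairs in $[\beta I,Q]$, and the classical Banach principle on the complete set $[\beta I,Q]$ finishes the job; uniqueness in $[\alpha I,Q]$ then follows, as you say, because every solution lies in $f([\alpha I,Q])\subseteq[\beta I,Q]$. Your route is marginally more self-contained (no ordered-space machinery needed), while the paper's route is the one that generalizes when the Lipschitz bound genuinely requires comparability. Your closing remark about the first increment is well taken and applies equally to the paper: the constant $\delta$ is built from the lower bound $\beta$, but $Y_0=\alpha I$ lies below $\beta I$, so the stated estimate in terms of $\|Y_1-Y_0\|$ with the factor $\delta^n$ needs either the telescoping to start at $Y_1$ or a separate bound on $\|Y_2-Y_1\|$ using the lower bound $\alpha$; the paper does not address this point at all, so your treatment is if anything more careful.
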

\begin{proof}
  Define a function $f:[ \alpha I,Q]\to\mathcal{M}(n)$ by $f(Y)=Q-A^*Y^{-\frac{t}{s}}A-B^*Y^{-\frac{p}{s}}B$. Then $f(\alpha I)\geq (\lambda_n(Q)
-\alpha^{-\frac{t}{s}}\|A\|^2-\alpha^{-\frac{p}{s}}\|B\|^2)I > \alpha I$. Therefore $0<\alpha I<f(\alpha I)$, implies $\beta \geq \alpha > 0$. Also for any  $Y \in [\alpha I,Q]$, $ A^*Y^{-\frac{t}{s}}A \leq \alpha^{-\frac{t}{s}}A^*A$ and $B^*Y^{-\frac{p}{s}}A \leq \alpha^{-\frac{p}{s}}B^*B$.
Therefore $0 < \beta I \leq Q-\alpha^{-\frac{t}{s}}A^*A-\alpha^{-\frac{p}{s}}B^*B \leq Q-A^*Y^{-\frac{t}{s}}A-B^*Y^{-\frac{p}{s}}B \leq Q$. So $f$ maps $[\alpha I,Q]$ into $[\beta I,Q]$. More particularly, it maps $[\beta I,Q]$ into itself. Also it is easy to see that $f$ is order-preserving.
Now let $Y,Z \in [\beta I,Q]$ be such that $Y \leq Z$. Then by Lemma \ref{lemma 3} we have
\begin{center}
$~\|Y^{\frac{t}{s}}-Z^{\frac{t}{s}}\| \leq \frac{t}{s} \beta^{\frac{t}{s}-1}\|Y-Z\|,$\\
$\|Z^{-\frac{t}{s}}-Y^{-\frac{t}{s}}\| \leq \frac{t}{s} \beta^{-\frac{t}{s}-1}\|Y-Z\|,$\\
$\|Z^{-\frac{p}{s}}-Y^{-\frac{p}{s}}\| \leq \frac{p}{s} \beta^{-\frac{p}{s}-1}\|Y-Z\|.$
\end{center}
Therefore
\begin{equation*}
\begin{split}
\|f(Y)-f(Z)\|&=\|Q-A^*Y^{-\frac{t}{s}}A-B^*Y^{-\frac{p}{s}}B-Q+A^*Z^{-\frac{t}{s}}A+B^*Z^{-\frac{p}{s}}B\| \\
&=\|A^*(Z^{-\frac{t}{s}}-Y^{-\frac{t}{s}})A+B^*(Z^{-\frac{p}{s}}-Y^{-\frac{p}{s}})B\|\\
&\leq \|A\|^2\|Z^{-\frac{t}{s}}-Y^{-\frac{t}{s}}\|+\|B\|^2\|Z^{-\frac{p}{s}}-Y^{-\frac{p}{s}}\|\\
&\leq  (\frac{t}{s} \|A\|^2 \beta^{-\frac{t}{s}-1}+\frac{p}{s}\|B\|^2\beta^{-\frac{p}{s}-1})\|Y-Z\|,
\end{split}
\end{equation*} 
where $\frac{t}{s} \|A\|^2 \beta^{-\frac{t}{s}-1}+\frac{p}{s}\|B\|^2\beta^{-\frac{p}{s}-1} < 1$. Also, since for any $X\in [\alpha I,Q]$ has a lower bound $\alpha I$, an upper bound $Q$ and $f(Q)\leq Q$, therefore by Theorem \ref{lemma 7} equation (\ref{eq:15}) has a unique Hermitian positive definite solution $\bar{Y}$ in $[\beta I,Q]$ and hence in $[\alpha I,Q]$. Thus equation (\ref{eq:1}) has a unique Hermitian positive definite solution $\bar{X}=\bar{Y}^\frac{1}{s}$ in $\big[\alpha^\frac{1}{s}I,Q^\frac{1}{s}\big]$.
\end{proof}	

\begin{remark}
	The obtained solution $\bar{Y}$ in Theorem \ref{thm 6} is the maximal Hermitian positive definite solution of (\ref{eq:15}).  
\end{remark}
\begin{proof}
Let $\bar{Y_1}$ be any other Hermitian positive definite solution of (\ref{eq:15}) such that $\bar{Y_1}\geq \bar{Y}$. Then $\bar{Y_1}\leq Q$. Subsequently, $\bar{Y_1}\in [\bar{Y},Q]\subset[\alpha I,Q]$. But according to Theorem \ref{thm 6}, $\bar{Y}$ is the only Hermitian positive definite solution in $[\alpha I,Q]$. Therefore $\bar{Y_1}=\bar{Y}$. Thus $\bar{Y}$ is the maximal Hermitian positive definite solution of (\ref{eq:15}). By similar explanation we also conclude that $\bar{X}=\bar{Y}^\frac{1}{s}$ is the maximal Hermitian positive definite solution of (\ref{eq:1}). 
\end{proof}
\begin{example}
	Consider the following nonlinear matrix equation 
	\begin{equation}\label{exeq:1}
	X^3+A^*X^{-2}A+B^*X^{-1}B=Q,
	\end{equation}
	where $Q=\left( \begin{array}{ccc}
	2 & 0 & 0 \\
	0 & 2 & 0 \\
	0 & 0 & 2 \end{array} \right)$, $A= \left( \begin{array}{ccc} 0.02 & -0.1 & -0.02 \\
	0.08 & -0.1 & 0.02 \\
	-0.06 & -0.12 & 0.14 
	\end{array} \right)$ \\ and $B=\left( \begin{array}{ccc} -0.04 & 0.01 & -0.02 \\
	0.05 & 0.07 & -0.013 \\
	0.011 & 0.09 & 0.06 
	\end{array} \right)$. \\
	Then $s=3\geq t=2 \geq p=1$. Therefore in this case (\ref{exeq:1}) can be written as
	\begin{equation}\label{exeq:2}
	Y+A^*Y^{-\frac{2}{3}}A+B^*Y^{-\frac{1}{3}}B=Q,
	\end{equation} 
	where $Y=X^3\Rightarrow X=Y^\frac{1}{3}$.\\
	 Let $\alpha=1$, then $\alpha+\alpha^{-\frac{t}{s}}\|A\|^2+\alpha^{-\frac{p}{s}}\|B\|^2=1.06191157562005 <2= \lambda_n(Q)$ and $\beta=\lambda_n(Q-\alpha^{-\frac{t}{s}}A^*A-\alpha^{-\frac{p}{s}}B^*B)=1.946624597494775$. Also $t\beta^{-\frac{t}{s}}\|A\|^2+p\beta^{-\frac{p}{s}}\|B\|^2=0.0716001214949702 < 5.839873792484324=s\beta$. Therefore all the hypothesis of Theorem \ref{thm 6}  are satisfied with $\alpha=1$. After $8$ iterations, we get a solution of equation (\ref{exeq:2}) in $[\alpha I,Q]$ as \\
	
	$\left( \begin{array}{ccc} 
	1.990011507887876 & -0.001460413784344 & 0.003932548667216 \\
	-0.001460413784344 & 1.967817699120152 & 0.007205717778742 \\
	0.003932548667216 & 0.007205717778742 & 1.983761175394282
	\end{array} \right)$.  \\
	
	with error $e=3.124677099290812 \times 10^{-16}$. Therefore equation (\ref{exeq:1}) has a Hermitian positive definite solution in $[\alpha^{\frac{1}{3}}I, Q^{\frac{1}{3}}]$ as\\
	
	 $\left( \begin{array}{ccc} 
	 1.257819473237711 & -0.000309853059784 & 0.000829790450201 \\
	 -0.000309853059784 & 1.253124679713870 & 0.001525655417243 \\
	 0.000829790450201 & 0.001525655417243 & 1.256499440822798
	 \end{array} \right)$.  \\

	 	\begin{figure}
		\centering
	 		\begin{minipage}{0.7\textwidth}
	 			\includegraphics[width=0.8\textwidth]{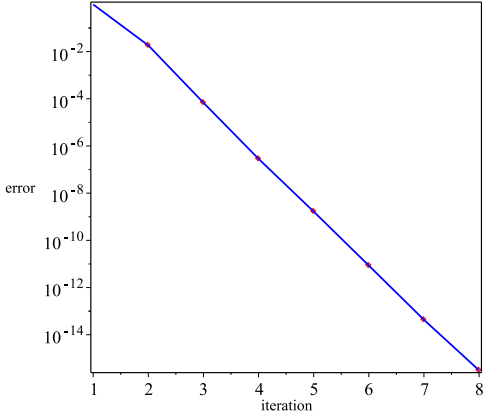}\\
	 			\caption{Convergence history of (\ref{exeq:2})}
	 			\label{Fig.1}
	 		\end{minipage}
	 	\end{figure}
	 \end{example}
Now let $t \geq s$ and without loss of generality let $t \geq p$, then $\max\{s,t,p\}=t$. Letting $X^t=Y$ equation (\ref{eq:1}) becomes
\begin{equation}\label{eq:17}
Y^{\frac{s}{t}}+A^*Y^{-1}A+B^*Y^{-\frac{p}{t}}B=Q,
\end{equation} 
where $\frac{s}{t},\frac{p}{t} \leq 1$. Again, a solution of equation (\ref{eq:17}) will yield a solution of our original equation. Now if equation (\ref{eq:17}) has a Hermitian positive definite solution $Y$ then
\begin{equation*}
\begin{split}
&~~~~A^*Y^{-1}A=Q-Y^{\frac{s}{t}}-B^*Y^{-\frac{p}{t}}B\\
&\Rightarrow Y^{-1}=A^{-*}(Q-Y^{\frac{s}{t}}-B^*Y^{-\frac{p}{t}}B)A^{-1}\\
&\Rightarrow Y=A(Q-Y^{\frac{s}{t}}-B^*Y^{-\frac{p}{t}}B)^{-1}A^*.
\end{split}
\end{equation*}
Since, $Y^{\frac{s}{t}}=Q-A^*Y^{-1}A-B^*Y^{-\frac{p}{t}}B \leq Q$ and $\lambda_n(Q)I \leq Q \leq \lambda_1(Q)I$, then by Lemma \ref{lemma 2} 
\begin{equation*}
Y \leq \Big(\frac{\lambda_1(Q)}{\lambda_n(Q)}\Big)^{\frac{t}{s}-1}Q^{\frac{t}{s}}=N_1~(\textmd{say})
\end{equation*}
Again, $A^*Y^{-1}A \leq Q\Rightarrow Y \geq AQ^{-1}A^* \geq \lambda_n(AQ^{-1}A^*)I$. Also since  $B^*Y^{-\frac{p}{t}}B \leq Q\Rightarrow Y^{\frac{p}{t}} \geq BQ^{-1}B^*$ and
$\lambda_1 (BQ^{-1}B^*)I \geq BQ^{-1}B^* \geq \lambda_n (BQ^{-1}B^*)I$, 

then again by the Lemma \ref{lemma 2},  
\begin{equation*}
\begin{split}
&~~~~(BQ^{-1}B^*)^{\frac{t}{p}} \leq \Big(\frac{\lambda_1 (BQ^{-1}B^*)}{\lambda_n (BQ^{-1}B^*)}\Big)^{\frac{t}{p}-1}Y\\
&\Rightarrow Y \geq \Big(\frac{\lambda_n (BQ^{-1}B^*)}{\lambda_1 (BQ^{-1}B^*)}\Big)^{\frac{t}{p}-1}(BQ^{-1}B^*)^{\frac{t}{p}} \geq \Big(\frac{\lambda_n (BQ^{-1}B^*)}{\lambda_1 (BQ^{-1}B^*)}\Big)^{\frac{t}{p}-1} \lambda_n^\frac{t}{p}(BQ^{-1}B^*)I.
\end{split}
\end{equation*}
Combining we get 
\begin{equation*}
Y \geq  \max \{\lambda_n(AQ^{-1}A^*),\Big(\frac{\lambda_n (BQ^{-1}B^*)}{\lambda_1 (BQ^{-1}B^*)}\Big)^{\frac{t}{p}-1} \lambda_n^\frac{t}{p}(BQ^{-1}B^*)\}I=m_1I~ \textmd{(say)}.
\end{equation*}
Thus $Y \in [m_1I,N_1]$.\\
\begin{theorem}\label{thm 7}
	Suppose there exists $b>0(\in \mathbb{R})$ such that the followings were considered: \\
	\begin{center}
		$(i) ~b> a$ \\
		$(ii) ~ Q \geq b^{-1}A^*A+b^{\frac{s}{t}}I+a^{-\frac{p}{t}}B^*B $ \\
		$(iii) ~s\|A\|^2 < \frac{1}{2}t \theta^2 a^{1-\frac{s}{t}}$ \\
		$(iv) ~p\|B\|^2 < sa^{\frac{p+s}{t}}$
	\end{center}
	where $a=\lambda_n(AQ^{-1}A^*)$ and $\theta=b^{-1}\lambda_n(A^*A)$. Then,
	\begin{enumerate}
		\item[$(I)$] equation (\ref{eq:17}) has a unique Hermitian positive definite solution $ \bar{X}\in [aI, bI]$,
		\item[$(II)$] $\bar{X} \in [F(aI,bI),F(bI,aI)]$,
		\item[$(III)$]  the sequences $X_n$ and $Y_n$ defined by 
		\begin{center}
			$X_0=aI$,\\
			$X_{n+1}=A(Q-X_n^\frac{s}{t}-B^* Y_n^{-\frac{p}{t}}B)^{-1}A^* $;\\
			$Y_0=bI$,\\
			$Y_{n+1}=A(Q-Y_n^\frac{s}{t}-B^* X_n^{-\frac{p}{t}}B)^{-1}A^*$,
		\end{center}
	\end{enumerate}
	converge to $\bar{X}$, that is $\displaystyle\lim_{n \rightarrow \infty} \|X_n-\bar{X}\|=\lim_{n \rightarrow \infty} \|Y_n-\bar{X}\|=0$ and the error estimation is given by
	\begin{equation*}
	\max \{\|X_n-\bar{X}\|,\|Y_n-\bar{X}\|\} \leq \frac{\delta^n}{1-\delta}\max \{\|X_1-X_0\|,\|Y_1-Y_0\|\},
	\end{equation*}
	where $\delta=2 \max\{\frac{s}{t}\|A\|^2~\theta^{-2}a^{\frac{s}{t}-1},~\frac{p}{t}\|A\|^2\|B\|^2\theta^{-2}a^{-\frac{p}{t}-1}\}.$
\end{theorem}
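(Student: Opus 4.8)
The plan is to solve the transformed equation (\ref{eq:17}) by locating a coupled fixed point of a mixed monotone operator and invoking Theorem \ref{lemma 5}. Define $F:[aI,bI]\times[aI,bI]\to\mathcal{M}(n)$ by
\begin{equation*}
F(X,Y)=A\big(Q-X^{\frac{s}{t}}-B^*Y^{-\frac{p}{t}}B\big)^{-1}A^*,
\end{equation*}
and note that a direct rearrangement shows $F(\bar{X},\bar{X})=\bar{X}$ is equivalent to $\bar{X}^{\frac{s}{t}}+A^*\bar{X}^{-1}A+B^*\bar{X}^{-\frac{p}{t}}B=Q$, i.e. to $\bar{X}$ solving (\ref{eq:17}). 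First I would check that $F$ is well defined: for $X,Y\in[aI,bI]$, since $\frac{s}{t},\frac{p}{t}\le 1$, Lemma \ref{lemma 4} gives $X^{\frac{s}{t}}\le b^{\frac{s}{t}}I$ and $B^*Y^{-\frac{p}{t}}B\le a^{-\frac{p}{t}}B^*B$, so by hypothesis $(ii)$,
\begin{equation*}
Q-X^{\frac{s}{t}}-B^*Y^{-\frac{p}{t}}B\ge Q-b^{\frac{s}{t}}I-a^{-\frac{p}{t}}B^*B\ge b^{-1}A^*A>0.
\end{equation*}
Hence the inner matrix is positive definite, $F$ is continuous on the box, and the same monotonicity facts (Lemma \ref{lemma 4}) show $F$ is non-decreasing in $X$ and non-increasing in $Y$, i.e. mixed monotone.

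Next I would verify that $F$ maps the box into itself and fulfils the boundary hypotheses of Theorem \ref{lemma 5} with $x_0=aI$, $y_0=bI$. For the upper endpoint, hypothesis $(ii)$ yields $Q-b^{\frac{s}{t}}I-a^{-\frac{p}{t}}B^*B\ge b^{-1}A^*A$, so the inverse is $\le bA^{-1}A^{-*}$ and therefore $F(bI,aI)\le A(bA^{-1}A^{-*})A^*=bI$. For the lower endpoint, $Q-a^{\frac{s}{t}}I-b^{-\frac{p}{t}}B^*B\le Q$ gives $F(aI,bI)\ge AQ^{-1}A^*\ge\lambda_n(AQ^{-1}A^*)I=aI$. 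Combining these with mixed monotonicity produces the chain $aI\le F(aI,bI)\le F(X,Y)\le F(bI,aI)\le bI$ for all $X,Y\in[aI,bI]$, so $F$ maps $[aI,bI]\times[aI,bI]$ into $[aI,bI]$ and the initial inequalities $aI\le F(aI,bI)$, $bI\ge F(bI,aI)$ hold. Finally $[aI,bI]$ is a complete metric space (closed and bounded in a finite dimensional space) in which every pair has the common upper bound $bI$, so the structural hypotheses of Theorem \ref{lemma 5} are met.

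The crux is the contraction estimate (\ref{eq:2}). For $X\ge U$ and $Y\le V$ in $[aI,bI]$, writing $P=Q-X^{\frac{s}{t}}-B^*Y^{-\frac{p}{t}}B$ and $R=Q-U^{\frac{s}{t}}-B^*V^{-\frac{p}{t}}B$, the resolvent identity gives $F(X,Y)-F(U,V)=AP^{-1}(R-P)R^{-1}A^*$ with $R-P=(X^{\frac{s}{t}}-U^{\frac{s}{t}})+B^*(Y^{-\frac{p}{t}}-V^{-\frac{p}{t}})B$. The well-definedness bound yields $P,R\ge b^{-1}A^*A\ge\theta I$, whence $\|P^{-1}\|,\|R^{-1}\|\le\theta^{-1}$. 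Applying Lemma \ref{lemma 6} together with submultiplicativity, then Lemma \ref{lemma 3} to each difference (all arguments are $\ge aI$), gives
\begin{equation*}
\|F(X,Y)-F(U,V)\|\le\|A\|^2\theta^{-2}\Big[\frac{s}{t}a^{\frac{s}{t}-1}\|X-U\|+\|B\|^2\frac{p}{t}a^{-\frac{p}{t}-1}\|Y-V\|\Big].
\end{equation*}
Bounding both bracketed coefficients by $\frac{\delta}{2}$ produces (\ref{eq:2}). The hard part is confirming $\delta<1$: hypothesis $(iii)$ is precisely $\frac{s}{t}\|A\|^2\theta^{-2}a^{\frac{s}{t}-1}<\frac{1}{2}$, while for the second coefficient one substitutes the bound $\|B\|^2<\frac{s}{p}a^{\frac{p+s}{t}}$ coming from $(iv)$ to reduce $\frac{p}{t}\|A\|^2\|B\|^2\theta^{-2}a^{-\frac{p}{t}-1}$ back to $\frac{s}{t}\|A\|^2\theta^{-2}a^{\frac{s}{t}-1}<\frac{1}{2}$; this algebraic reconciliation of $(iii)$ and $(iv)$ is the most delicate point of the argument.

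Finally I would apply Theorem \ref{lemma 5} to obtain the unique $\bar{X}\in[aI,bI]$ with $F(\bar{X},\bar{X})=\bar{X}$, which is part $(I)$. The mixed monotone iterations starting at $X_0=aI$, $Y_0=bI$ form an increasing sequence $\{X_n\}$ lying below $\bar{X}$ and a decreasing sequence $\{Y_n\}$ lying above $\bar{X}$, so in particular $X_1=F(aI,bI)\le\bar{X}\le F(bI,aI)=Y_1$, giving part $(II)$. Part $(III)$, namely the convergence $\|X_n-\bar{X}\|,\|Y_n-\bar{X}\|\to 0$ and the stated error bound with ratio $\delta$, is read off directly from the conclusion of Theorem \ref{lemma 5}.
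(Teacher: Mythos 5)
Your argument for parts $(I)$ and $(III)$ is essentially identical to the paper's: the same operator $F(X,Y)=A(Q-X^{\frac{s}{t}}-B^*Y^{-\frac{p}{t}}B)^{-1}A^*$, the same use of hypothesis $(ii)$ to get $Q-X^{\frac{s}{t}}-B^*Y^{-\frac{p}{t}}B\geq b^{-1}A^*A\geq\theta I$ (hence well-definedness, self-mapping of the box, and the $\theta^{-2}$ factor), the same mixed monotone verification, the same contraction constant $\delta$, and the same invocation of Theorem \ref{lemma 5}; your reconciliation of $(iii)$ and $(iv)$ via $\|B\|^2<\frac{s}{p}a^{\frac{p+s}{t}}$ and $a^{\frac{p+s}{t}}a^{-\frac{p}{t}-1}=a^{\frac{s}{t}-1}$ is exactly the computation the paper leaves implicit when it asserts $0\leq\delta<1$. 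Where you genuinely diverge is part $(II)$: the paper defines $G(X)=F(X,X)$ on the compact convex set $[F(aI,bI),F(bI,aI)]$, shows $G$ maps it into itself using mixed monotonicity, and applies the Schauder fixed point theorem together with uniqueness in $[aI,bI]$ to conclude $\bar{X}$ lies in that smaller interval. You instead argue that the iterates $X_n$ increase and $Y_n$ decrease with $X_n\leq\bar{X}\leq Y_n$, so that $X_1=F(aI,bI)\leq\bar{X}\leq F(bI,aI)=Y_1$. Your route is more elementary (no Schauder needed) and is valid because the closedness of the positive semidefinite cone lets you pass the order to the limit of the monotone sequences; its only cost is that the monotonicity and sandwiching of the iterates are not literally part of the statement of Theorem \ref{lemma 5} as quoted, so you would need to spell out the short induction ($X_0\leq X_1$, $Y_1\leq Y_0$, then mixed monotonicity propagates $X_n\leq X_{n+1}\leq Y_{n+1}\leq Y_n$) rather than ``read it off.'' The paper's Schauder argument buys the same conclusion without appealing to any structure of the iteration beyond uniqueness of the fixed point.
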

\begin{proof}
Let $X, Y \in [aI,bI]$,  then $Y \geq aI\Rightarrow Y^{-\frac{p}{t}} \leq a^{-\frac{p}{t}}I\Rightarrow B^*Y^{-\frac{p}{t}}B \leq a^{-\frac{p}{t}}B^*B$, as  $\frac{s}{t}$ and $\frac{p}{t} \leq 1$. Also $X \leq bI\Rightarrow X^{\frac{s}{t}} \leq b^{\frac{s}{t}}I$.
Therefore,
\begin{eqnarray}\label{eq:18}
Q-X^{\frac{s}{t}}-B^*Y^{-\frac{p}{t}}B &\geq & Q-b^{\frac{s}{t}}I-a^{-\frac{p}{t}}B^*B \notag \\
& \geq & b^{-1}A^*A > 0,~~~ \textmd{(By condition (ii))}
\end{eqnarray}    
Thus  $Q-X^{\frac{s}{t}}-B^*Y^{-\frac{p}{t}}B$ is invertible. Consider a function $F:[aI,bI] \times [aI,bI] \to \mathcal{M}(n)$ by \\ $F(X,Y)= A(Q-X^{\frac{s}{t}}-B^*Y^{-\frac{p}{t}}B)^{-1}A^*$. We will show that $F$ maps $[aI,bI] \times [aI,bI]$ into $[aI,bI]$. \\
Let $X,~Y \in [aI,bI]$. Then 
\begin{equation*}
\begin{split}
&~~~~Q-X^{\frac{s}{t}}-B^*Y^{-\frac{p}{t}}B \leq Q\\
&\Rightarrow A(Q-X^{\frac{s}{t}}-B^*Y^{-\frac{p}{t}}B)^{-1}A^* \geq AQ^{-1}A^*\\
&\Rightarrow F(X,Y)\geq AQ^{-1}A^* \geq \lambda_n(AQ^{-1}A^*)I=aI.
\end{split}
\end{equation*}
Also from (\ref{eq:18}) we get,
\begin{equation*}
\begin{split}
&~~~~Q-X^{\frac{s}{t}}-B^*Y^{-\frac{p}{t}}B \geq Q-b^{\frac{s}{t}}I-a^{-\frac{p}{t}}B^*B \geq b^{-1}A^*A\\
&\Rightarrow (Q-X^{\frac{s}{t}}-B^*Y^{-\frac{p}{t}}B)^{-1} \leq bA^{-1}A^{-*}\\
&\Rightarrow A(Q-X^{\frac{s}{t}}-B^*Y^{-\frac{p}{t}}B)^{-1}A^* \leq bI\\
&\Rightarrow F(X,Y)\leq bI.
\end{split}
\end{equation*}
Therefore $F$ maps $[aI,bI] \times [aI,bI]$ into $[aI,bI]$. Thus $F(aI,bI) \geq aI$ and $F(bI,aI)\leq bI$.\\
Let $X_1,X_2,Y \in [aI,bI]$ with $X_1 \leq X_2$. Then 
\begin{equation*}
\begin{split}
&~~~~X_1^{\frac{s}{t}} \leq X_2^{\frac{s}{t}}\\
&\Rightarrow Q-X_1^{\frac{s}{t}}-B^*Y^{-\frac{p}{t}}B \geq Q-X_2^{\frac{s}{t}}-B^*Y^{-\frac{p}{t}}B\\
&\Rightarrow A(Q-X_1^{\frac{s}{t}}-B^*Y^{-\frac{p}{t}}B)^{-1}A^* \leq A(Q-X_2^{\frac{s}{t}}-B^*Y^{-\frac{p}{t}}B)^{-1}A^*\\
&\Rightarrow F(X_1,Y) \leq F(X_2,Y).
\end{split}
\end{equation*}
Again let $Y_1,Y_2,X \in [aI,bI]$ with $Y_1 \leq Y_2$. Then
\begin{equation*}
\begin{split}
&~~~~Y_1^{-\frac{p}{t}} \geq Y_2^{-\frac{p}{t}}\\
&\Rightarrow Q-X^{\frac{s}{t}}- B^*Y_1^{-\frac{p}{t}}B \leq Q-X^{\frac{s}{t}}- B^*Y_2^{-\frac{p}{t}}B\\
&\Rightarrow A(Q-X^{\frac{s}{t}}- B^*Y_1^{-\frac{p}{t}}B)^{-1}A^* \geq A(Q-X^{\frac{s}{t}}- B^*Y_2^{-\frac{p}{t}}B)^{-1}A^*\\
&\Rightarrow F(X,Y_1) \geq F(X,Y_2).
\end{split}
\end{equation*}
Therefore $F$ has mixed monotone property.\\
Now let $X,Y,U,V \in [aI,bI]$ with $X \geq U$, $Y \leq V$, then \\
$
\| F(X,Y)-F(U,V) \| ~~~~~~~~~~~~~~~~~~~~~~~~~~~~~~~~~~\\
=\|A(Q-X^{\frac{s}{t}}-B^*Y^{-\frac{p}{t}}B)^{-1}A^* -A(Q-U^{\frac{s}{t}}-B^*V^{-\frac{p}{t}}B)^{-1}A^* \|\\
=\| A\{(Q-X^{\frac{s}{t}}-B^*Y^{-\frac{p}{t}}B)^{-1}-(Q-U^{\frac{s}{t}}-B^*V^{-\frac{p}{t}}B)^{-1}\}A^* \|\\
\leq \|A\|^2~\|(Q-X^{\frac{s}{t}}-B^*Y^{-\frac{p}{t}}B)^{-1}-(Q-U^{\frac{s}{t}}-B^*V^{-\frac{p}{t}}B)^{-1}\|\\
\leq \|A\|^2~\theta^{-2}~\|Q-X^{\frac{s}{t}}-B^*Y^{-\frac{p}{t}}B-Q+U^{\frac{s}{t}}+B^*V^{-\frac{p}{t}}B\|\\
\big(\textmd{Since, from (\ref{eq:18}), for }X,Y \in [aI,bI],\\~ Q-X^{\frac{s}{t}}-B^*Y^{-\frac{p}{t}}B \geq  b^{-1}A^*A > b^{-1}\lambda_n(A^*A)I=\theta I>0\big)\\
\leq \|A\|^2~\theta^{-2}\Big[\|X^{\frac{s}{t}}-U^{\frac{s}{t}}\|+\|B^*Y^{-\frac{p}{t}}B-B^*V^{-\frac{p}{t}}B\|\Big]\\
\leq \|A\|^2\theta^{-2}\Big[\frac{s}{t}a^{\frac{s}{t}-1}\|X-U\|+\|B\|^2~\|Y^{-\frac{p}{t}}-V^{-\frac{p}{t}}\|\Big]\\
\leq \|A\|^2~\theta^{-2}\Big[\frac{s}{t}a^{\frac{s}{t}-1}\|X-U\|+\|B\|^2 \frac{p}{t}a^{-\frac{p}{t}-1}\|Y-V\|\Big]\\
= \|A\|^2\theta^{-2}\frac{s}{t}a^{\frac{s}{t}-1}\|X-U\|+\|A\|^2~\theta^{-2}\|B\|^2 \frac{p}{t}a^{-\frac{p}{t}-1}\|Y-V\|\\
\leq \frac{\delta}{2}\big[\|X-U\|+\|Y-V\|\big]
$

where $\delta=2 \max\{\frac{s}{t}\|A\|^2~\theta^{-2}a^{\frac{s}{t}-1},~\frac{p}{t}\|A\|^2\|B\|^2\theta^{-2}a^{-\frac{p}{t}-1}\}$.
From conditions $(iii)$ and $(iv)$,  $0 \leq \delta <1$.  Also since $F$ is continuous in $[aI,bI]\times[aI,bI]$ and every pair $(X,Y) \in [aI,bI]\times[aI,bI]$ has an upper bound and a lower bound in it. So by using Theorem \ref{lemma 5} we conclude that there exists an $\bar{X} \in [aI,bI]$ such that $\bar{X}=F(\bar{X},\bar{X})$. Also this $\bar{X}$ is unique, which is a solution of equation (\ref{eq:17}). Subsequently,  equation (\ref{eq:1}) has a unique Hermitian positive definite solution $\bar{X}^\frac{1}{t}$ in $\big[a^\frac{1}{t}I,b^\frac{1}{t}I\big]$.\\
 $(III)$ immediately follows from the same theorem.

Now to prove $(II)$, we use the Schauder fixed point theorem. We define the mapping $G: [F(aI, bI),F(bI,aI)] \rightarrow [aI,bI]$ by 
\begin{equation*}
G(X)=F(X,X), \textmd{ for all } X \in [F(aI, bI),F(bI,aI)].
\end{equation*}
We claim that $G([F(aI, bI),F(bI,aI)]) \subset [F(aI, bI),F(bI,aI)]$.\\
 Let $X \in [F(aI, bI),F(bI,aI)]$, that is $F(aI,bI)\leq X \leq F(bI,aI)$. Then
\begin{equation}\label{21}
F(F(aI,bI), F(bI,aI)) \leq F(X,X)=G(X) \leq F(F(bI,aI),F(aI,bI))
\end{equation}
(Since $F$ is mixed monotone). \\
Also since $F(aI,bI) \geq aI$, $F(bI, aI) \leq bI$ we have
\begin{equation*}
F(F(bI,aI),F(aI,bI)) \leq F(bI,aI) \textmd{ and } F(F(aI,bI), F(bI,aI)) \geq F(aI,bI).
\end{equation*}
Then (\ref{21}) becomes $F(aI,bI) \leq G(X)\leq F(bI,aI)$. Thus, it proves our claim.\\
Now since $G$ maps the compact convex set $[F(aI, bI),F(bI,aI)]$ into itself and $G$ is continuous, it follows from Schauder fixed point theorem that $G$ has at least one fixed point $\bar{Y}$ (say) in this set. Thus $G(\bar{Y})=\bar{Y}$ $\Rightarrow F(\bar{Y},\bar{Y})=\bar{Y}$ and $[F(aI, bI),F(bI,aI)] \subset [aI,bI]$. But the fixed point of $F$ is unique in $[aI,bI]$. Thus $\bar{Y}=\bar{X} \in [F(aI, bI),F(bI,aI)]$. This proves $(II)$. 
\end{proof}

\begin{remark}
The obtained solution $\bar{X}$ in Theorem \ref{thm 7} is the minimal Hermitian positive definite solution of (\ref{eq:17}).
\end{remark}

\begin{proof}
Let $\bar{X_1}$ be any other Hermitian positive definite solution of (\ref{eq:17}) such that $\bar{X_1}\leq \bar{X}$. Then $\bar{X_1}\geq \lambda_n(AQ^{-1}A^*)I=aI$. Subsequently, $\bar{X_1}\in[aI,\bar{X}]\subset [aI,bI]$. But according to Theorem \ref{thm 7}, $\bar{X}$ is the only Hermitian positive definite solution in $[aI,bI]$. Therefore $\bar{X_1}=\bar{X}$. Thus $\bar{X}$ is the minimal Hermitian positive definite solution of (\ref{eq:17}). By similar explanation we also conclude that $\bar{X}^\frac{1}{t}$ is the minimal Hermitian positive definite solution of (\ref{eq:1}).	
\end{proof}

\begin{example}
		Consider the following nonlinear matrix equation 
	\begin{equation}\label{exeq:3}
	X^3+A^*X^{-4}A+B^*X^{-1}B=Q,
	\end{equation}
	 where  \hspace*{.1cm} $Q=\left( \begin{array}{ccc}
	7.5 & 0 & 1 \\
	0 & 7.5 & 1 \\
	1 & 1 & 8.5 \end{array} \right)$,  \hspace*{.1cm} $A= \left( \begin{array}{ccc} 2.11 & 0.01 & 0.01\\
	-0.05 & 1.98 & -0.18 \\
	0.1 & 0.19 & 2.38 
	\end{array} \right)$ and \\  \hspace*{3.1cm} $B=\left( \begin{array}{ccc} -0.09 & 0.01 & 0.01 \\
	-0.01 & -0.15 & -0.09 \\
	0.04 & 0.1 & -0.94 
	\end{array} \right)$. \\
	Then $t=4\geq s=3 \geq p=1$. Therefore in this case (\ref{exeq:3}) can be written as
	\begin{equation}\label{exeq:4}
	Y^\frac{3}{4}+A^*Y^{-1}A+B^*Y^{-\frac{1}{4}}B=Q,
	\end{equation} 
	where $Y=X^4\Rightarrow X=Y^\frac{1}{4}$. Let $b=1$.\\ Then $a=\lambda_n(AQ^{-1}A^*)=0.50754289893569<1=b$,  $\theta=b^{-1}\lambda_n(A^*A)=3.940180790866569$, $s\|A\|^2=17.269307701721161<26.207795027718277=\frac{1}{2}t \theta^2 a^{1-\frac{s}{t}}$ and $p\|B\|^2=0.90086767947051 < 1.52262869680707=sa^{\frac{p+s}{t}}$.\\
	Also in that case $b^{-1}A^*A+b^{\frac{s}{t}}I+a^{-\frac{p}{t}}B^*B  \leq Q$ holds. Thus all the hypothesis of Theorem \ref{thm 7} are satisfied with $b=1$. After $20$ iterations, we get the unique solution of equation (\ref{exeq:4}) in $[aI, bI]$ as\\ $\bar{X}= X_{20}=\\Y_{20}$=$\left( \begin{array}{ccc} 
	0.678793416023482 & 0.017053803392642 & -0.094857343070291 \\
	0.017053803392642 & 0.622769611868454 & -0.138376527663483 \\
	-0.094857343070291 & -0.138376527663483 & 0.872777116839001
	\end{array} \right)$, \\
	with error $e=5.22834858061468 \times 10^{-16}$.\\
	 Considering the sequence $X_n$, $Y_n$ with $X_0=0.50754289893569 I$ and $Y_0=I$, where $X_{n+1}=F(X_n,Y_n)$ and $Y_{n+1}=F(Y_n,X_n)$ we approach the solution. For each iteration $n$, we get the error as $e'=\|X_n-X_{n+1}\|$ (curve 1), $f=\|Y_n-Y_{n+1}\|$ (curve 2) and $e=\max\{e',f\}$. 
	 Therefore, equation (\ref{exeq:3}) has a unique Hermitian positive definite solution in $[a^{\frac{1}{4}}I, b^{\frac{1}{4}}I]$ as\\
	 
	 $\left( \begin{array}{ccc} 
	 0.906231149966594 & 0.003723228318032 & -0.028702574652700 \\
	 0.003723228318032 & 0.884927869436603 & -0.043501905340609 \\
	 -0.028702574652700 & -0.043501905340609 & 0.962538505271393
	 \end{array} \right)$.  \\
		 
	 \begin{figure}
	 	\begin{minipage}{0.7\textwidth}
	 		\includegraphics[width=1.0\textwidth]{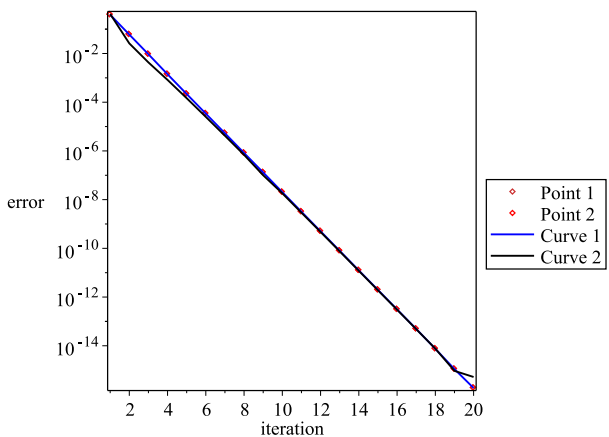}\\
	 		\caption{Convergence history of (\ref{exeq:4})}
	 		\label{Fig.2}
	 		\end{minipage}
	 \end{figure}
\end{example}

\section*{\textbf{Acknowledgements}}
Samik Pakhira gratefully acknowledges the financial support provided by CSIR, Govt. of India.
\pagebreak

\bibliographystyle{amsplain}

\begin{thebibliography}{99}
		
	\bibitem{AMT1990} W.N. Anderson, T.D. Morley, G.E. Trapp, \textit{Positive solutions to $X=A-BX^{-1}B^*$}, Linear Algebra Appl. \textbf{134} (1990), 53--62.
	
	\bibitem{AM2015} M. Asgari, B. Mousavi, \textit{Solving a class of nonlinear matrix equations via the coupled fixed point theorem}, Appl. Math. Comput. \textbf{259} (2015), 364--373.
	
	\bibitem{BDS2012} M. Berzig, X. Duan, B. Samet, \textit{Positive definite solution of the matrix equation   $X=Q-A^*X^{-1}A+B^*X^{-1}B$ via Bhaskar-Lakshmikantham fixed point theorem}, Mathematical Sciences \textbf{6} (2012), 27-ֳ-32.
	
	\bibitem{BS2011}M. Berzig, B. Samet, \textit{Solving systems of nonlinear matrix equations involving Lipshitzian mappings}, Fixed Point Theory Appl. \textbf{2011} (2011), 89.
	
	\bibitem{BL2006}T.G. Bhaskar, V. Lakshmikantham, \textit{Fixed point theorems in partially ordered metric spaces and applications},
	Nonlinear Anal. \textbf{65} (2006), 1379--1393.
	
	\bibitem{Bhatia}  R. Bhatia, Matrix Analysis, Springer.
	
	 \bibitem{SMK2016}  S. Bose, Sk M. Hossein, K. Paul, \textit{Positive definite solution of a nonlinear matrix equation},  J. Fixed Point Theory Appl. \textbf{18} (2016), 627--643.
	
	\bibitem{DL2009a} X. Duan, A. Liao, \textit{On the existence of Hermitian positive definite solutions of the matrix equation $X^s+A^*X^{-t}A=Q$}, Linear Algebra Appl. \textbf{429} (2008), 673--687.

	\bibitem{DWL2014} X. Duan, Q. Wang, C. Li, \textit{Positive definite solution of a class of nonlinear matrix equation}, Linear and Multilinear Algebra, \textbf{62} (2014) 839--852.
	
	\bibitem{ERR1993} J.C. Engwerda, A.C.M. Ran, A.L. Rukeboer, \textit{Necessary and sufficient conditions for the existence of a positive definite solution of the matrix equation $X + A^*X^{-1}A = I$}, Linear Algebra Appl. \textbf{186} (1993), 255--275.
	
    \bibitem{FLY2015} L. Fang, S. Liu, X. Yin, \textit{Positive definite solutions and perturbation analysis of a class of nonlinear matrix equations}, J. Appl. Math. Comput., (2015) DOI 10.1007/s12190-015-0966-7.
    
    \bibitem{FL1996} A. Ferrante, B.C. Levy, \textit{Hermitian solutions of the equation $X=Q-NX^{-1}N^*$}, Linear Algebra Appl. \textbf{247} (1996), 359--373.
    
    \bibitem{F1998} T. Furuta, \textit{Operator inequalities associated with H\"{o}lder-McCarthy and Kantorovich inequalities}, J. Inequal. Appl. \textbf{2} (1998), 137--148.
    
    \bibitem{H2017} V. Hasanov, \textit{On the matrix equation $X+A^*X^{-1}A-B^*X^{-1}B=I$}, Linear and Multilinear
    Algebra, \textbf{66} (2018), 1783-1798.
    
    \bibitem{HA2015} V. Hasanov, A. Ali, \textit{On convergence of three iterative methods for solving of the matrix equation $X + A^*X^{-1}A + B^*X^{-1}B = Q$}, Comp. Appl. Math. \textbf{36} (2017), 79--87.
   
	\bibitem{I2006} I.G. Ivanov, \textit{On positive definite solutions of the family of matrix equations $X + A^*X^{-n}A = Q$}, J. Comput. Appl. Math. \textbf{193} (2006), 277--301.
	
	\bibitem{L2009} Y. Lim, Solving the nonlinear matrix equation $X-\sum_{i=1}^m{M_i}X^{\delta_i}{M_i}^*=Q$ via a contraction principle, Linear Algebra Appl. \textbf{430} (2009), 1380--1383.
	
	\bibitem{LC2011} A. Liu, G. Chen, \textit{On the Hermitian Positive Definite Solutions of Nonlinear Matrix Equation $X^s+A^*X^{-t_1}A+B^*X^{-t_2}B=Q$}, Hindawi Publishing Corporation
	Mathematical Problems in Engineering, \textbf{2011} (2011), Article ID 163585, 18 pages.
	
	\bibitem{LHZ2008} J. Long, X. Hu, L. Zhang, \textit{On the Hermitian positive definite solution of the matrix equation   $X+A^*X^{-1}A+B^*X^{-1}B=I$}, Bull. Braz. Math. Soc. \textbf{39} (2008), 371--386.
	
	
   \bibitem{M2002} B. Meini, \textit{Efficient computation of the extreme solutions of $X+A^*X^{-1}A=Q$ and $X-A^*X^{-1}A=Q$}, Math. Comput. \textbf{71} (2002), 1189--1204.
   
   \bibitem{PPKA2011} I. Popchev, P. Petkov, M. Konstantinov, V. Angelova, \textit{Condition numbers for the matrix equation $X+A^*X^{-1}A+B^*X^{-1}B=I$}, Comptes. Rendus. de L’Academie Bulgare des Sciences, \textbf{64} (2011), 1679--1688.  
   
   \bibitem{RR2004} A.C.M. Ran, M.C.B. Reurings, \textit{A fixed point theorem in partially ordered sets and some applications to matrix equations}, Proc. Amer. Soc. \textbf{132} (2004), 1435--1443.
   
   \bibitem{VVSP2013} S. Vaezzadeh, S. Vaezpour, R. Saadati, C. Park, \textit{The iterative methods for solving nonlinear matrix equation   $X+A^*X^{-1}A+B^*X^{-1}B=Q$}, Adv. Differ. Equ. \textbf{2013} (2013), Article ID 229. 
   
  
    
  

\end{thebibliography}

\end{document}